\newcommand{\D}{\mathcal{D}}
\newcommand{\E}{\mathcal{E}}
\newcommand{\V}{\mathcal{V}}
\newcommand{\Gr}{\mathcal{G}}
\newcommand{\W}{\mathcal{W}}
\newcommand{\Sig}{\Sigma}
\newcommand{\Ncal}{\mathcal{N}}
\newcommand{\Xcal}{\mathcal{X}}
\newcommand{\Ucal}{\mathcal{U}}
\newcommand{\Ycal}{\mathcal{Y}}
\newtheorem{theorem}{Theorem}[section]
\newtheorem{corollary}[theorem]{Corollary}
\newtheorem{definition}{Definition}
\newtheorem{proposition}[theorem]{Proposition}
\newtheorem{assumption}{Assumption}
\newtheorem{example}[theorem]{Example}
\newenvironment{proof}[1][Proof]{\textbf{#1.} }{\ \rule{0.5em}{0.5em}}
\newcolumntype{Y}{>{\raggedleft\arraybackslash}X}
\begin{document}

\begin{frontmatter}

\title{Observability and reconstructibility of large-scale Boolean control networks via network aggregations\thanksref{footnoteinfo}} 

\thanks[footnoteinfo]{This work was supported by
National Natural Science Foundation of China (No. 61603109),
Natural Science Foundation of Heilongjiang Province of China (No. LC2016023),
and Fundamental Research Funds for the Central Universities (HEUCFM170406).
This paper was not presented at any conference.}

\author[HEU]{Kuize Zhang}\ead{zkz0017@163.com},    

\address[HEU]{College of Automation, Harbin Engineering University, Harbin, 150001, PR China}  

\begin{keyword}                           
	large-scale Boolean control network, observability, reconstructibility, network aggregation, graph theory
\end{keyword}                             

\begin{abstract}                          
	It is known that determining the observability and reconstructibility of Boolean control networks (BCNs) 
	are both {\bf NP}-hard
	in the number of nodes of BCNs.
	In this paper, we use the aggregation method to overcome the challenging complexity problem in
	verifying the observability and reconstructibility of large-scale BCNs 
	with special structures in some sense.
	First, we define a special class of aggregations that are compatible with observability and reconstructibility
	(i.e, observability and reconstructibility are meaningful for each part of the aggregation),
	and show that even for this special class of aggregations, the whole BCN being observable/reconstructible does not 
	imply the resulting sub-BCNs being observable/reconstructible, and vice versa.
	Second, for acyclic aggregations in this special 
	class, we prove that all resulting sub-BCNs being observable/reconstructible implies the whole BCN being observable/reconstructible.
	Third, we show that finding such acyclic special aggregations with sufficiently small parts can tremendously 
	reduce computational complexity. Finally, we use the BCN T-cell receptor kinetics model 
	to illustrate the efficiency of these results.
	In addition, the special aggregation method characterized in this paper can also be used to deal with 
	the observability/reconstructibility of large-scale linear (special classes of nonlinear) control systems
	with special network structures. 
\end{abstract}

\end{frontmatter}

\section{Introduction}
\label{sec:Introduction}

Boolean networks (BNs), initiated by Kauffman \cite{Kauffman1969RandomBN} to model genetic regulatory 
networks (GRNs) in 1969, are a class of discrete-time discrete-space dynamical systems.
In a BN, nodes
can be in one of two discrete values ``$1$'' and ``$0$'', which represent
the state ``on'' of genes/high concentration of proteins and the
state ``off'' of genes/low concentration of proteins,
respectively. Every node updates its values according to the values of the nodes of
the network. When external regulation or perturbation is considered, 
BNs are naturally extended to Boolean control networks (BCNs) \cite{Ideker2001}.
Although a BN/BCN is a simplified model of GRNs, 
it can be used to characterize many
important phenomena of biological systems, e.g., cell cycles \cite{Faure2006},
cell apoptosis \cite{Sridharan2012}, and so on. Hence the study on BNs/BCNs has
been paid wide attention \cite{Kitano2002,Akutsu2000,Albert2000,Zhang2016AppInvettoMammalianCellCycle,Zhao2013AggregationAlgorithmBNattractor}.

The study on control-theoretic problems of BCNs can date back to 2007 
\cite{Akutsu2007}, in which the problem of determining the controllability of BCNs is proved 
{\bf NP}-hard in the number of nodes, even restricted to some special classes of BCNs.
In addition, it is pointed out that ``One of the major goals of systems biology is
to develop a control theory for complex biological systems'' in \cite{Akutsu2007}.
Since then, especially since the control-theoretic problem framework for BCNs is constructed 
in \cite{Cheng2009bn_ControlObserva} based on a new tool that is called the semi-tensor product
(STP) of matrices in 2009, 
many basic control-theoretic problems of BCNs have been characterized, e.g.,
controllability
\cite{Cheng2009bn_ControlObserva,Zhao2010InputStateIncidenceMatrix},
observability \cite{Cheng2009bn_ControlObserva,Zhao2010InputStateIncidenceMatrix,Fornasini2013ObservabilityReconstructibilityofBCN,Li2015ControlObservaBCN,Zhang2014ObservabilityofBCN},
reconstructibility \cite{Fornasini2013ObservabilityReconstructibilityofBCN,Zhang2015WPGRepresentationReconBCN},
identifiability \cite{Cheng2011IdentificationBCN}, invertibility \cite{Zhang2015Invertibility_BCN},
Kalman decomposition \cite{Zou2015KalmanDecompositionBCN}, and
other related works \cite{Li2015LogicalMatrixFactorizationBN,Liu2015,Li2014ControllabilityBCNAvoidingState,Li2016PinningControlSynchronizationBCN},
just to name a few.
Similar to controllability, observability and reconstructibility are also basis control-theoretic problems,
where observability/reconstructibility means using input sequences and the corresponding output sequences to 
determine the initial/current states.
The problems of determining the observability and reconstructibility of BCNs are
also {\bf NP}-hard in the number of nodes
\cite{Laschov2013ObservabilityofBN:GraphApproach,Zhang2015WPGRepresentationReconBCN}.
Hence checking both controllability and observability/reconstructibility of BCNs are computationally intractable.
However, unlike controllability, BCNs possess different types of observability/reconstructibility.
In \cite{Zhang2014ObservabilityofBCN}, the relationship between all four types of observability of BCNs
in the literature is studied, and it is proved that no two of 
them are equivalent, revealing the nonlinearity of BCNs.
In \cite{Zhang2015WPGRepresentationReconBCN}, except for giving a new equivalent condition for 
the reconstructibility studied in \cite{Fornasini2013ObservabilityReconstructibilityofBCN},
a more general reconstructibility is characterized.
In the investigation of controllability/observability/reconstructibility,
the primary point is to design fast algorithms for
verifying them. In \cite{Cheng2009bn_ControlObserva,Zhao2010InputStateIncidenceMatrix}, the algorithms 
for verifying controllability run in exponential time in the number of nodes. 
In \cite{Zhang2014ObservabilityofBCN}, a unified method based on finite automata is proposed to verify
all four different types of observability in the literature. Although 
the algorithms designed in \cite{Zhang2014ObservabilityofBCN} perform not worse than
the ones designed in \cite{Fornasini2013ObservabilityReconstructibilityofBCN,Li2013ObservabilityConditionsofBCN,Li2015ControlObservaBCN} in the worse case,
they all run in exponential time in the number of nodes.
The algorithms in \cite{Fornasini2013ObservabilityReconstructibilityofBCN,Zhang2015WPGRepresentationReconBCN}
for verifying reconstructibility also run in exponential time in the number of nodes.
Hence, in order to find fast algorithms for verifying controllability/observability/reconstructibility,
an attemptable way is to consider 
BCNs with special network structures.
The method of aggregating (i.e., partitioning) network graphs that has been widely used in pagerank algorithms
\cite{Ishii2012AggregationPageRank}, social networks \cite{Louati2013AggregationSocialNetworks},
BNs/BCNs \cite{Zhao2013AggregationAlgorithmBNattractor,Zhao2016ControllabilityAggregationBCN},
etc., may provide an effective way.

It is {\bf NP}-complete to check whether 
a BN has a fixed point (cf. \cite{Zhao2005NPHardnessFixedPointBN}).
Hence similar to verifying controllability and observability/reconstructibility, it is also computationally 
intractable to check the existence of fixed points for a BN. In
\cite{Zhao2013AggregationAlgorithmBNattractor}, an efficient way to find attractors is proposed based on 
aggregating a BN; particularly, for an acyclic aggregation,
an efficient way to find all attractors is proposed
by composing the attractors of each part of a BN. Similar idea has been used to deal with the 
controllability and stabilizability of BCNs \cite{Zhao2016ControllabilityAggregationBCN}.
Since BCNs have external nodes (i.e., input nodes), the results related to controllability/stabilizability based on
aggregations are not so 
perfect as the counterpart for BNs in \cite{Zhao2013AggregationAlgorithmBNattractor}.
Actually, one cannot always verify the controllability/stabilizability of a BCN
by verifying the controllability/stabilizability of the parts of the BCN.
It is proved in \cite{Zhao2016ControllabilityAggregationBCN} that if a BCN
is controllable then all parts of the BCN are controllable for any aggregation of the BCN each of whose
parts has at least one state node. However, in this paper we will show that this conclusion does not hold for 
observability/reconstructibility even for a more special class of aggregations than the one used in 
\cite{Zhao2016ControllabilityAggregationBCN}.
Nevertheless, we will still study whether the aggregation method can be used to
deal with the observability/reconstructibility of BCNs,
since for acyclic aggregations, the method of stabilizing the whole BCN
by stabilizing some parts of the BCN given in \cite{Zhao2016ControllabilityAggregationBCN} may 
tremendously reduce computational complexity under certain sufficient conditions
that the number of nodes of each part
is much smaller than that of the nodes of the whole BCN.
Also due to the former essential differences between controllability and observability/reconstructibility,
we have to use different aggregations in this paper.
The advantage of the aggregation method has been shown by practical examples
in both \cite{Zhao2013AggregationAlgorithmBNattractor} and \cite{Zhao2016ControllabilityAggregationBCN},
where the BCN T-cell receptor kinetics model (cf. \cite{Klamt2006TCellReceptor})
is used to illustrate the effectiveness of the aggregation method.
The model has $37$ state nodes and $3$ input nodes,\footnote{In \cite{Zhao2013AggregationAlgorithmBNattractor},
in order to compute attractors, the $3$ input nodes are assumed to be constant.}
i.e., it has $2^{37}$ states, and $2^{3}$ inputs. It is almost impossible to use the general methods
given in \cite{Zhao2005NPHardnessFixedPointBN,Zhao2010InputStateIncidenceMatrix} to 
compute its attractors or check its controllability/stabilizability due to the speed limitation of electrical computers.
However, using the aggregation methods in \cite{Zhao2013AggregationAlgorithmBNattractor,Zhao2016ControllabilityAggregationBCN},
these two problems have been solved. In this paper, we will try to find special aggregations 
that are compatible with observability/reconstructibility, use the special aggregations to 
design fast algorithms for verifying the observability/reconstructibility
of large-scale BCNs with special structures, and also use the BCN T-cell receptor kinetics model
to illustrate the efficiency of our results.

The remainder of the paper is organized as below.
In Section \ref{sec:preliminary}, basic concepts on BCNs, observability, reconstructibility,
and new aggregations compatible with observability and reconstructibility are introduced.
In Section \ref{sec:mainresults_obser}, the observability results based on aggregations are
shown. First, we define a special class of aggregations that are compatible with observability 
(i.e, observability is meaningful for each part of the aggregation),
and show the relationship between the whole BCN being observable and all resulting sub-BCNs being observable.
There is no implication relation between them.
Second, for acyclic aggregations in this special class, we prove that all resulting sub-BCNs being
observable implies the whole BCN being observable.
Finally, approximate computational complexity based on the special aggregation method is analyzed,
showing that approximately, the more parts such an aggregation has, the more effective the method is.
In section \ref{sec:mainresults_recon}, similar results on reconstructibility is derived.
In section \ref{sec:application},
the BCN T-cell receptor kinetics model is used to illustrate the efficiency
of the main results given in Sections \ref{sec:mainresults_obser} and \ref{sec:mainresults_recon}.
Section \ref{sec:conclusion} is a short conclusion with further discussion.

\section{Preliminaries}\label{sec:preliminary}

\subsection{Boolean control networks}

A BCN is described by a directed graph that is called a network graph,
and logical equations (e.g., Eqn. \eqref{eqn1:observability_aggregation}),
where a network graph
consists of input nodes, state nodes, output nodes, and directed edges between nodes  (e.g., as shown in Fig. 
\ref{fig1:BCNnetworkgraph}).
In a network graph, each directed edge from node $v_i$ to node $v_j$ means that the value 
($1$ or $0$) of $v_j$ at time step $t+1$ is affected by the value of $v_i$ at time step $t$.
Note that from a network graph, 
one can only know whether or not a node is affected by another node.
In order to uniquely determine a BCN, 
logical equations are also needed to describe the specific updating rules.
Actually, logical equations uniquely determine a BCN. And furthermore, 
the BCNs represented by different logical equations may have the same network graph.
For example, the BCNs represented by the following equations \eqref{eqn1:observability_aggregation}
and \eqref{eqn2:observability_aggregation} both 
have the network graph as shown in Fig. \ref{fig1:BCNnetworkgraph}.

	\begin{equation}\label{eqn1:observability_aggregation}
		\left\{\begin{split}
			&A(t+1)=B(t)\wedge u(t),\\
			&B(t+1)=\neg A(t)\vee u(t),\\
			&y(t)=A(t),
		\end{split}\right.
	\end{equation}
	where $t=0,1,\dots$; $A(t),B(t),u(t),y(t)$ are Boolean variables ($1$ or $0$); $\wedge,\vee$, and $\neg$
	denote AND, OR, and NOT, respectively.

	\begin{equation}\label{eqn2:observability_aggregation}
		\left\{\begin{split}
			&A(t+1)=B(t)\bar\vee u(t),\\
			&B(t+1)=\neg A(t)\wedge u(t),\\
			&y(t)=A(t),
		\end{split}\right.
	\end{equation}
	where $t=0,1,\dots$; $A(t),B(t),u(t),y(t)$ are Boolean variables; $\bar\vee$ denotes XOR.

	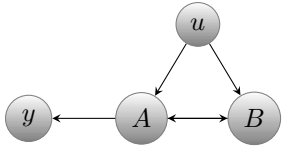
\begin{figure}
		\begin{center}
        \begin{tikzpicture}[->,>=stealth,node distance=1.5cm]
          \tikzstyle{node}=[shape=circle,draw=black!50,top color=white,bottom color=black!50]
          \tikzstyle{input}=[shape=circle,draw=black!50,top color=white,bottom color=black!50]
          \tikzstyle{output}=[shape=circle,draw=black!50,top color=white,bottom color=black!50]
          \tikzstyle{disturbance}=[shape=circle,fill=red!20!blue,draw=none,text=white,inner sep=2pt]
          \node[node] (A)                {$A$};
          \node[node] (B) [right of=A] {$B$};
          \node[input] (u) [above of=A,xshift=0.75cm,yshift=-.25cm] {$u$};
		  \node[output] (y) [left of=A] {$y$};
          \path   (A) edge (y)
                  (u) edge  (A)
				  (u) edge (B)
				  (A) edge (B)
				  (B) edge (A);
        \end{tikzpicture}
		\caption{Example of a network graph consisting of one input node $u$, two state nodes $A,B$,
		and one output node $y$.}
		\label{fig1:BCNnetworkgraph}
      \end{center}
	\end{figure}

In Fig. \ref{fig1:BCNnetworkgraph}, an input node has $0$ indegree (i.e., the number of entering edges at the node), 
an output node has $0$ outdegree (i.e., the number of leaving edges at the node),
and state nodes may have both positive indegree and positive outdegree.
Hereinafter, we denote $\D:=\left\{ 0,1 \right\}$; $[i,j]:=\left\{ i,i+1,\dots,j \right\}$
for integers $i\le j$;
$C_i^j:=\frac{i!}{j!(i-j)!}$ for positive integers $i\ge j$.
$2^S$ stands for the power set of a set $S$,
$\oplus$ and $\odot$ stand for the addition and multiplication modulo 2, respectively.
Generally, a BCN is formulated as in Eqn. \eqref{BCN:ObservabilityAggregation}:

\begin{equation}\label{BCN:ObservabilityAggregation}
\begin{split}
  &\left\{ \begin{split}
 &x_1 (t + 1) = f_1 (x_1 (t), \dots ,x_n (t),u_1 (t), \dots ,u_m (t)), \\
 &x_2 (t + 1) = f_2 (x_1 (t), \dots ,x_n (t),u_1 (t), \dots ,u_m (t)), \\
  &\vdots  \\
 &x_n (t + 1) = f_n (x_1 (t), \dots ,x_n (t),u_1 (t), \dots ,u_m (t)),
 \end{split} \right.\\
 &\left\{ \begin{split}
 &y_1 (t ) = h_1 (x_1 (t), \dots ,x_n (t)), \\
 &y_2 (t ) = h_2 (x_1 (t), \dots ,x_n (t)), \\
  &\vdots  \\
 &y_q (t ) = h_n (x_1 (t), \dots ,x_n (t)),
 \end{split} \right.\\
 \end{split}
\end{equation}
where $t=0,1,\dots$ denote time steps; $x_i(t),u_j(t)$, and $y_k(t)\in\D$ denote 
values of state node $x_i$, input node $u_j$, and output node $y_k$ at time step $t$,
respectively,
$i\in[1,n]$, $j\in[1,m]$, $k\in[1,q]$; $f_i:\D^{m+n}\to\D$, and $h_j:\D^{n}\to \D$
are Boolean functions, $i\in[1,n]$, $j\in[1,q]$.

Eqn. \eqref{BCN:ObservabilityAggregation} is represented in the compact form
\begin{equation}\label{BCN1:ObservabilityAggregation}
	\begin{split}
		&x(t+1) = f(x(t),u(t)),\\
		&y(t) = h(x(t)),
	\end{split}
\end{equation}
where $t=0,1,\dots$; $x(t)\in\D^{n}$, $u(t)\in\D^{m}$, and $y(t)\in\D^{q}$
stand for the state, input, and output of the BCN at time step $t$;
$f:\D^{n+m}\to \D^{n}$ and $h:\D^{n}\to \D^{q}$ are Boolean mappings.

\subsection{Observability of Boolean control networks}

In \cite{Zhang2014ObservabilityofBCN}, all four types of observability are 
characterized for BCNs. In this paper, we are particularly interested in the linear type
(also characterized in \cite{Fornasini2013ObservabilityReconstructibilityofBCN}).

\begin{definition}\label{def1:observability_aggregation}
	A BCN \eqref{BCN1:ObservabilityAggregation} is called observable if 
	for all different initial states $x_0,x_0'\in\D^{n}$, for each input sequence
	$\{u_0,u_1,\dots\}\subset\D^{m}$, the corresponding output sequences 
	$\{y_0,y_1,\dots\}$ and $\{y_0',y_1',\dots\}$ are different.
\end{definition}

We use a graph-theoretic method proposed in \cite{Zhang2014ObservabilityofBCN} to verify observability
in what follows.

\begin{definition}[\cite{Zhang2014ObservabilityofBCN}]\label{def2:observability_aggregation}
	Consider a BCN \eqref{BCN1:ObservabilityAggregation}.
	A weighted directed graph $\Gr_o=(\V,\E,\W,2^{\D^m})$ is
	called the observability weighted pair graph (OWPG) of the BCN if the vertex set $\V$ equals
	$ \{\{x,x'\}\in\D^n\times\D^n|h(x)=h(x')\}$, the edge set $\E$ equals
	$\{(\{x_1,x_1'\},\{x_2,x_2'\})\in\V\times\V|
	\text{there exists }u\in\D^m\text{ such that }f(x_1,u)=x_2\text{ and }f(x_1',u)=x_2',
	\text{ or, }f(x_1,u)=x_2'\text{ and }f(x_1',u)=x_2\}\subset \V\times\V$, and
	the weight function $\W:\E\to 2^{\D^m}$ maps each edge $(\{x_1,x_1'\},\{x_2,x_2'\})\in\E$ to $
	\{u\in\D^m|f(x_1,u)=x_2\text{ and }f(x_1',u)=x_2',
	\text{ or, }f(x_1,u)=x_2'\text{ and }f(x_1',u)=x_2	\}$. 
	A vertex $\{x,x'\}$ is called diagonal if $x=x'$, and called non-diagonal otherwise.
\end{definition}

\begin{proposition}[\cite{Zhang2014ObservabilityofBCN}]\label{prop1:observability_aggregation}
	A BCN \eqref{BCN1:ObservabilityAggregation} is not observable if and only if
	its OWPG has a non-diagonal vertex $v$ and a cycle $C$ such that 
	there is a path from $v$ to a vertex of $C$.
\end{proposition}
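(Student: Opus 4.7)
The plan is to convert Definition~\ref{def1:observability_aggregation} into a statement about infinite walks in the finite graph $\Gr_o$, and then invoke finiteness to trade ``infinite walk'' for ``path to a cycle''. The key observation is that the BCN \eqref{BCN1:ObservabilityAggregation} is \emph{not} observable exactly when there exist distinct $x_0,x_0'\in\D^n$ and an input sequence $u_0,u_1,\dots$ such that for every $t\ge 0$ one has $h(x_t)=h(x_t')$, where $x_{t+1}=f(x_t,u_t)$ and $x_{t+1}'=f(x_t',u_t)$. By Definition~\ref{def2:observability_aggregation}, this says precisely that $\{x_t,x_t'\}\in\V$ for every $t$, and consecutive pairs are joined by an edge of $\E$ whose weight contains $u_t$. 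Hence non-observability is equivalent to the existence of an infinite walk in $\Gr_o$ starting at some non-diagonal vertex.

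For the ``$\Rightarrow$'' direction I would start from indistinguishable $x_0\ne x_0'$ witnessed by an input sequence, form the ordered pair trajectory $(x_t,x_t')_{t\ge0}$, observe that $v:=\{x_0,x_0'\}$ is a non-diagonal vertex, and read off an infinite walk $v=v_0\to v_1\to\cdots$ in $\Gr_o$. Because $|\V|<\infty$, the pigeonhole principle forces some vertex $w$ to appear at two indices $t_1<t_2$; the segment $v_{t_1}\to\cdots\to v_{t_2}=v_{t_1}$ is then a cycle $C$, and $v\to v_1\to\cdots\to v_{t_1}$ is a path from $v$ into $C$.

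For the ``$\Leftarrow$'' direction, given a non-diagonal vertex $v=\{x_0,x_0'\}$, a path $v=v_0\to v_1\to\cdots\to v_r$ with $v_r\in C$, and the cycle $C$, I would splice the path with infinitely many traversals of $C$ to obtain an infinite walk $v_0\to v_1\to v_2\to\cdots$. At each step I pick any input $u_t\in\W(v_t,v_{t+1})$. The only subtle point is to recover an \emph{ordered} pair trajectory from this walk of unordered pairs: I would fix the ordering $(x_0,x_0')$ at the start and then let the dynamics decide, i.e., define $x_{t+1}:=f(x_t,u_t)$ and $x_{t+1}':=f(x_t',u_t)$. By the definition of $\E$ and $\W$, the unordered pair $\{x_{t+1},x_{t+1}'\}$ equals $v_{t+1}\in\V$, hence $h(x_{t+1})=h(x_{t+1}')$ for all $t$; combined with $x_0\ne x_0'$, this contradicts observability.

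The main obstacle is precisely this last bookkeeping step: because $\V$ consists of unordered pairs but the dynamics act on ordered pairs, one must verify that following an edge with some $u\in\W(\cdot,\cdot)$ always produces an ordered image that is one of the two orderings of the next vertex. This follows immediately from the two cases inside the definition of $\W$, so the argument is routine once stated carefully. Everything else reduces to finiteness of $\V$ and the equivalence ``non-observability $\Leftrightarrow$ infinite walk from a non-diagonal vertex''.
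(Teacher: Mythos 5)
The paper does not prove this proposition at all: it is imported verbatim from the cited reference \cite{Zhang2014ObservabilityofBCN}, so there is no in-paper argument to compare against. Your proof is correct and is the natural (and, as far as the weighted-pair-graph formulation goes, essentially the standard) one: the equivalence ``non-observable $\Leftrightarrow$ infinite walk in $\Gr_o$ from a non-diagonal vertex'' plus pigeonhole in one direction and splicing the path with repeated traversals of the cycle in the other. You also correctly identify and discharge the only delicate point, namely that $\V$ consists of unordered pairs while the dynamics act on ordered pairs: since $\{f(x_t,u_t),f(x_t',u_t)\}$ depends only on the unordered pair $\{x_t,x_t'\}$ and on $u_t$, the definition of $\W$ guarantees the ordered trajectory projects onto the chosen walk, so no gap remains.
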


The computational cost of constructing the OWPG of a BCN \eqref{BCN1:ObservabilityAggregation}
is at most $(2^n+2^n(2^n-1)/2)2^m=2^{n+m}+2^{2n+m-1}-2^{n+m-1}$.
Hence the computational complexity of using Proposition \ref{prop1:observability_aggregation} to check 
observability is $O(2^{2n+m-1})$. On the other hand, the size of the network graph of a BCN is 
at most $n+m+q+mn+n(n+q)$, which is significantly smaller than the size of the OWPG of the BCN,
then is it possible to design an algorithm to check observability by using only the network graph? The answer is ``No'',
because there exist two BCNs that have the same network graph, one of which is observable, but the other of which 
is not observable (see BCNs \eqref{eqn1:observability_aggregation} and \eqref{eqn2:observability_aggregation}).
Note that for a BCN \eqref{BCN1:ObservabilityAggregation}, the subgraph $(\V_d,(\V_d\times\V_d)\cap \E)$
generated by the set $\V_d$ of all diagonal vertices of
its OWPG contains a cycle; and for each diagonal vertex $v\in\V_d$, there is a path from $v$ to some vertex of a cycle
in the subgraph.
Then the following corollary holds.
\begin{corollary}\label{cor1:observability_aggregation}
	Consider a BCN \eqref{BCN1:ObservabilityAggregation}. If in its OWPG 
	there is a path from a non-diagonal vertex to a diagonal vertex
	then the BCN is not observable.
\end{corollary}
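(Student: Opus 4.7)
The plan is to reduce the corollary directly to Proposition \ref{prop1:observability_aggregation} by exhibiting, from the hypothesized path, a path from a non-diagonal vertex to some vertex on a cycle of the OWPG.

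First, I would verify the claim (already asserted in the excerpt) that the subgraph of the OWPG induced by the diagonal vertex set $\V_d$ contains a cycle and that from every diagonal vertex there is a path to some vertex of such a cycle. The reasoning is standard: for any state $x\in\D^n$ and any input $u\in\D^m$, letting $x' := f(x,u)$, the definition of $\E$ in Definition \ref{def2:observability_aggregation} puts an edge from the diagonal vertex $\{x,x\}$ to the diagonal vertex $\{x',x'\}$. Hence every diagonal vertex has outdegree at least one within $\V_d$, and in the finite subgraph $(\V_d,(\V_d\times \V_d)\cap\E)$ any forward trajectory of vertices must eventually revisit a vertex, producing a cycle; this also shows that every diagonal vertex admits a path to a vertex on such a cycle.

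Next, given the hypothesis, fix a non-diagonal vertex $v$ and a diagonal vertex $w$ together with a path $P_1$ from $v$ to $w$ in the OWPG. By the previous paragraph, there exists a cycle $C$ lying entirely in the diagonal subgraph and a path $P_2$ from $w$ to some vertex $w'$ of $C$. Concatenating $P_1$ with $P_2$ yields a path from the non-diagonal vertex $v$ to the vertex $w'$ of the cycle $C$. Proposition \ref{prop1:observability_aggregation} then immediately implies that the BCN is not observable.

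There is no real obstacle here; the only subtle point is justifying the existence of a cycle reachable from any diagonal vertex, which uses finiteness of $\D^n$ together with the total-ness of $f(\cdot,u)$ for each $u$. Once that is in place, the corollary is just a concatenation-of-paths argument applied to Proposition \ref{prop1:observability_aggregation}.
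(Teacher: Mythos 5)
Your proposal is correct and follows essentially the same route as the paper, which justifies the corollary by the remark immediately preceding it: the diagonal subgraph $(\V_d,(\V_d\times\V_d)\cap\E)$ always contains a cycle reachable from every diagonal vertex, so the hypothesized path extends to one satisfying Proposition \ref{prop1:observability_aggregation}. Your added justification of that remark (every diagonal vertex $\{x,x\}$ has an outgoing edge to $\{f(x,u),f(x,u)\}$, plus finiteness) is exactly the right filling-in of the detail the paper leaves implicit.
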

For example, BCNs \eqref{eqn1:observability_aggregation} and \eqref{eqn2:observability_aggregation} have
the same network graph shown in Fig. \ref{fig1:BCNnetworkgraph}, \eqref{eqn1:observability_aggregation}
is not observable (see Fig. \ref{fig2:BCNnetworkgraph}) by Corollary \ref{cor1:observability_aggregation},
but \eqref{eqn2:observability_aggregation} is observable
(see Fig. \ref{fig3:BCNnetworkgraph}) by Proposition \ref{prop1:observability_aggregation}.

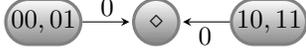
\begin{figure}
        \centering
\begin{tikzpicture}[shorten >=1pt,auto,node distance=1.5 cm, scale = 1.0, transform shape,
	->,>=stealth,inner sep=2pt,state/.style={
	rectangle,minimum size=6mm,rounded corners=3mm,
	very thick,draw=black!50,
	top color=white,bottom color=black!50,font=\ttfamily},
	point/.style={rectangle,inner sep=0pt,minimum size=2pt,fill=}]
	\node[state] (0001)                                 {$00,01$};
	\node[state] (*) [right of = 0001]               {$\diamond$};
	\node[state] (1011) [right of = *]               {$10,11$};

	\path [->] (0001) edge node {$0$} (*)
	      [->] (1011) edge node {$0$} (*)
		  ;
        \end{tikzpicture}
		\caption{Observability weighted pair graph of the BCN \eqref{eqn1:observability_aggregation},
		where $\diamond$ denotes the subgraph generated by all diagonal vertices.}
	\label{fig2:BCNnetworkgraph}
\end{figure}

\begin{figure}
        \centering
\begin{tikzpicture}[shorten >=1pt,auto,node distance=1.5 cm, scale = 1.0, transform shape,
	->,>=stealth,inner sep=2pt,state/.style={
	rectangle,minimum size=6mm,rounded corners=3mm,
	very thick,draw=black!50,
	top color=white,bottom color=black!50,font=\ttfamily},
	point/.style={rectangle,inner sep=0pt,minimum size=2pt,fill=}]
	\node[state] (0001)                                 {$00,01$};
	\node[state] (1011) [right of = 0001]               {$10,11$};
	\node[state] (*) [right of = 1011]               {$\diamond$};

        \end{tikzpicture}
		\caption{Observability weighted pair graph of the BCN \eqref{eqn2:observability_aggregation},
		where $\diamond$ denotes the subgraph generated by all diagonal vertices.}
	\label{fig3:BCNnetworkgraph}
\end{figure}
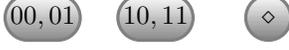

\subsection{Reconstructibility of Boolean control networks}

In this paper, we consider the reconstructibility of BCNs also of linear type
(cf. \cite{Zhang2015WPGRepresentationReconBCN,Fornasini2013ObservabilityReconstructibilityofBCN}).

\begin{definition}\label{def1:reconstructibility_aggregation}
	A BCN \eqref{BCN1:ObservabilityAggregation} is called reconstructible if there exists a positive integer $p$ 
	such that
	for all different initial states $x_0,x_0'\in\D^{n}$, for each input sequence
	$\{u_0,u_1,\dots,u_p\}\subset\D^{m}$, if the corresponding states $x_{p+1}$ and $x'_{p+1}$
	at time step $p+1$ are different, then the corresponding output sequences 
	$\{y_0,y_1,\dots,y_{p+1}\}$ and $\{y_0',y_1',\dots,y'_{p+1}\}$ are different.
\end{definition}

One directly sees that if a BCN \eqref{BCN1:ObservabilityAggregation} is observable then it is also 
reconstructible. However, the converse does not hold.
We also use a graph-theoretic method proposed in \cite{Zhang2015WPGRepresentationReconBCN} to verify reconstructibility.

\begin{definition}[\cite{Zhang2015WPGRepresentationReconBCN}]\label{def2:reconstructibility_aggregation}
	Consider a BCN \eqref{BCN1:ObservabilityAggregation}.
	A weighted directed graph $\Gr_r=(\V,\E,\W,2^{\D^m})$ is
	called the reconstructibility weighted pair graph (RWPG) of the BCN if the vertex set $\V$ equals
	$ \{\{x,x'\}\in\D^n\times\D^n|x\ne x',h(x)=h(x')\}$, the edge set $\E$ equals
	$\{(\{x_1,x_1'\},\{x_2,x_2'\})\in\V\times\V|
	\text{there exists }u\in\D^m\text{ such that }f(x_1,u)=x_2\text{ and }f(x_1',u)=x_2',
	\text{ or, }f(x_1,u)=x_2'\text{ and }f(x_1',u)=x_2\}\subset \V\times\V$, and
	the weight function $\W:\E\to 2^{\D^m}$ maps each edge $(\{x_1,x_1'\},\{x_2,x_2'\})\in\E$ to $
	\{u\in\D^m|f(x_1,u)=x_2\text{ and }f(x_1',u)=x_2',
	\text{ or, }f(x_1,u)=x_2'\text{ and }f(x_1',u)=x_2	\}$. 
\end{definition}

Note that for a BCN \eqref{BCN1:ObservabilityAggregation}, what differentiates its 
OWPG and RWPG is the vertex set.

\begin{proposition}[\cite{Zhang2015WPGRepresentationReconBCN}]\label{prop1:reconstructibility_aggregation}
	A BCN \eqref{BCN1:ObservabilityAggregation} is not reconstructible if and only if
	its RWPG has a cycle.
\end{proposition}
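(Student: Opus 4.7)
The plan is to convert the input/output/state formulation of reconstructibility into a walk-theoretic statement on the RWPG, and then argue each direction separately.

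For the sufficiency direction (the RWPG has a cycle implies the BCN is not reconstructible), suppose a cycle $\{\xi_0,\xi_0'\}\to\{\xi_1,\xi_1'\}\to\cdots\to\{\xi_k,\xi_k'\}=\{\xi_0,\xi_0'\}$ exists, and pick inputs $u_0,\ldots,u_{k-1}$ from the corresponding weight sets. Given any positive integer $p$, I would initialise the BCN at the pair $(\xi_0,\xi_0')$ and apply the periodic extension of $u_0,\ldots,u_{k-1}$ for $p+1$ steps. By the vertex/edge definition of the RWPG, at each time $t\in[0,p+1]$ the reached pair $\{x_t,x_t'\}$ is again an RWPG vertex, so $x_t\neq x_t'$ and $h(x_t)=h(x_t')$. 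Hence $x_{p+1}\neq x_{p+1}'$ while the two output sequences coincide, so the reconstructibility criterion fails for this value of $p$. Since $p$ was arbitrary, no $p$ can work, and the BCN is not reconstructible.

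For the necessity direction, suppose the BCN is not reconstructible. Choose $p$ strictly greater than the number of RWPG vertices, and use the negation of the definition to extract distinct initial states $x_0,x_0'$ and inputs $u_0,\ldots,u_p$ such that $x_{p+1}\neq x_{p+1}'$ while $h(x_t)=h(x_t')$ for every $t\in[0,p+1]$. A short forward-propagation argument then shows $x_t\neq x_t'$ for every $t\in[0,p+1]$: if equality held at some intermediate $t$, determinism of $f$ under the shared input $u_t$ would force $x_s=x_s'$ for all $s\geq t$, contradicting $x_{p+1}\neq x_{p+1}'$. Consequently every $\{x_t,x_t'\}$ is an RWPG vertex, each consecutive pair is an edge whose weight contains $u_t$, and the trajectory is a walk of length $p+1$ in the RWPG. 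By the pigeonhole principle some vertex repeats, and the subwalk between the two occurrences is the claimed cycle.

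The step I expect to be the main obstacle is ruling out coalescence of the two trajectories at an intermediate time in the necessity direction: the reconstructibility definition only constrains the terminal pair at time $p+1$, whereas the RWPG vertex set explicitly excludes diagonal pairs, so the walk must be shown to stay inside the RWPG throughout. The determinism of $f$ makes this essentially immediate, after which the rest of the argument is elementary finite-graph pigeonhole reasoning, and the proof parallels the OWPG argument of Proposition \ref{prop1:observability_aggregation} except that here the starting non-diagonal vertex is supplied by the cycle itself rather than assumed separately.
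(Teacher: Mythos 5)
Your proposal is correct: both directions (the periodic traversal of a cycle defeating every candidate $p$, and the pigeonhole extraction of a cycle from a length-$(p+1)$ walk that is kept off the diagonal by determinism of $f$) are sound, and the coalescence issue you flag is indeed the only delicate point and is handled properly. Note that the paper itself offers no proof of this proposition — it is quoted from \cite{Zhang2015WPGRepresentationReconBCN} — but your argument is the standard one underlying that reference and parallels the paper's treatment of Proposition \ref{prop1:observability_aggregation}.
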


The computational cost of constructing the RWPG of a BCN \eqref{BCN1:ObservabilityAggregation}
is at most $(2^n(2^n-1)/2)2^m=2^{2n+m-1}-2^{n+m-1}$.
Hence the computational complexity of using Proposition \ref{prop1:reconstructibility_aggregation} to check 
reconstructibility is $O(2^{2n+m-1})$. On the other hand, the same as observability,
one cannot only use the network graph 
to check the reconstructibility of BCNs either, since there also exist two BCNs with the same network graph such 
that one BCN is reconstructible, but the other one is not reconstructible. Consider the following two
BCNs:
	\begin{equation}\label{eqn1:reconstructibility_aggregation}
		\left\{\begin{split}
			&A(t+1)=B(t)\wedge u(t),\\
			&B(t+1)=\neg A(t)\vee u(t),\\
			&y(t)=A(t)\bar\vee B(t),
		\end{split}\right.
	\end{equation}
	where $t=0,1,\dots$; $A(t),B(t),u(t),y(t)$ are Boolean variables.

	\begin{equation}\label{eqn2:reconstructibility_aggregation}
		\left\{\begin{split}
			&A(t+1)=B(t)\bar\vee u(t),\\
			&B(t+1)=A(t)\bar\vee u(t),\\
			&y(t)=A(t)\bar\vee B(t),
		\end{split}\right.
	\end{equation}
	where $t=0,1,\dots$; $A(t),B(t),u(t),y(t)$ are Boolean variables.

BCNs \eqref{eqn1:reconstructibility_aggregation} and \eqref{eqn2:reconstructibility_aggregation} have
the same network graph. By Proposition \ref{prop1:reconstructibility_aggregation}, \eqref{eqn1:reconstructibility_aggregation}
is reconstructible (see Fig. \ref{fig12:BCNnetworkgraph}),
but \eqref{eqn2:reconstructibility_aggregation} is not reconstructible
(see Fig. \ref{fig13:BCNnetworkgraph}).

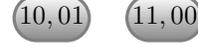
\begin{figure}
        \centering
\begin{tikzpicture}[shorten >=1pt,auto,node distance=1.5 cm, scale = 1.0, transform shape,
	->,>=stealth,inner sep=2pt,state/.style={
	rectangle,minimum size=6mm,rounded corners=3mm,
	very thick,draw=black!50,
	top color=white,bottom color=black!50,font=\ttfamily},
	point/.style={rectangle,inner sep=0pt,minimum size=2pt,fill=}]
	\node[state] (1001)                                 {$10,01$};
	\node[state] (1100) [right of = 1001]               {$11,00$};

        \end{tikzpicture}
		\caption{Reconstructibility weighted pair graph of the BCN \eqref{eqn1:reconstructibility_aggregation}.}
	\label{fig12:BCNnetworkgraph}
\end{figure}

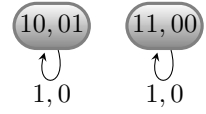
\begin{figure}
        \centering
\begin{tikzpicture}[shorten >=1pt,auto,node distance=1.5 cm, scale = 1.0, transform shape,
	->,>=stealth,inner sep=2pt,state/.style={
	rectangle,minimum size=6mm,rounded corners=3mm,
	very thick,draw=black!50,
	top color=white,bottom color=black!50,font=\ttfamily},
	point/.style={rectangle,inner sep=0pt,minimum size=2pt,fill=}]

	\node[state] (1001)                                 {$10,01$};
	\node[state] (1100) [right of = 1001]               {$11,00$};

	\path (1001) [loop below] edge node {$1,0$} (1001)
		  (1100) [loop below] edge node {$1,0$} (1100)
	;

        \end{tikzpicture}
		\caption{Reconstructibility weighted pair graph of the BCN \eqref{eqn2:reconstructibility_aggregation}.}
		\label{fig13:BCNnetworkgraph}
\end{figure}

\subsection{Aggregations of Boolean control networks}

For a BCN, let us denote the set of nodes of its network graph by
$\Ncal = \{x_1,\dots,x_n,u_1,\dots,u_m,y_1,\dots,y_q\}$, the set of state nodes
by $\Xcal =\{x_1, \dots, x_n\}$, the set of input nodes by
$\Ucal = \{u_1, \dots , u_m\}$, and the set of output nodes by $\Ycal=\{y_1,\dots,y_q\}$.
The nodes can be partitioned into $s$ blocks as follows:
\begin{equation}\label{partition:observability_aggregation}
	\Ncal=\Ncal_1\cup\cdots\cup\Ncal_s,
\end{equation}
where each $\Ncal_i$ is a nonempty proper subset of $\Ncal$, and $\Ncal_i\cap\Ncal_j=\emptyset$
for all $i\ne j$, $i,j\in[1,s]$. Each partition \eqref{partition:observability_aggregation} is called an aggregation of
the network graph. Note that in a BCN \eqref{BCN1:ObservabilityAggregation}, $\Ucal$ can be empty,
meaning that only a unique constant input sequence can be fed into the BCN.
Hereinafter we assume that neither $\Xcal$ nor
$\Ycal$ can be empty. If $\Ycal$ is empty, then one cannot observe any information of states of the BCN.
If $\Xcal$ is empty, it is meaningless to observe the BCN.
For an aggregation \eqref{partition:observability_aggregation}, each part $\Ncal_i$ is regarded as a super node,
then the aggregation is regarded as a new directed graph that is called an aggregation graph,
where the edge set consists of edges of the network graph whose tails and heads belong to different parts.
Also, each super node $\Ncal_i$ is regarded as a sub-BCN, denoted by $\Sig_i$.
For each super node $\Ncal_i$, 
its indegree (resp. outdegree) is the sum of edges entering (resp. leaving) $\Ncal_i$ in the aggregation graph, $i\in[1,s]$.
The purpose of aggregating the network graph of the BCN \eqref{BCN1:ObservabilityAggregation}
is to verify the observability/reconstructibility of the BCN by verifying the observability/reconstructibility
of its sub-BCNs.
So in order to reduce computational cost, the size of sub-BCNs should be as small as possible.

In \cite{Zhao2016ControllabilityAggregationBCN}, only controllability is considered, so the BCN 
\eqref{BCN1:ObservabilityAggregation} considered in \cite{Zhao2016ControllabilityAggregationBCN}
has an empty set $\Ycal$ of output nodes. Under the assumption that in an aggregation 
\eqref{partition:observability_aggregation}, each super node $\Ncal_i$ is weakly connected and contains at least
one state node, it is proved that the BCN is controllable only if each sub-BCN $\Sig_i$ is controllable, but
the converse is not true. One directly sees that without the assumption that each super node is 
weakly connected, all results in \cite{Zhao2016ControllabilityAggregationBCN} remain valid.
For observability/reconstructibility, since we must consider a nonempty set $\Ycal$ of output nodes, 
we aggregate the network graph in more special ways. Later on, for observability/reconstructibility,
we will show somehow converse results compared to the controllability results given in 
\cite{Zhao2016ControllabilityAggregationBCN}. However, controllability is not dual to observability 
for BCNs due to the essence of nonlinearity \cite{Zhang2014ObservabilityofBCN}.

In order to make each super node $\Ncal_i$ be a BCN such that it is meaningful to verify its observability/reconstructibility,
we only consider an aggregation \eqref{partition:observability_aggregation} satisfying the following Assumption 
\ref{assu1:observability_aggregation} in this paper. Assumption \ref{assu1:observability_aggregation}
is stronger than the previous assumption used in \cite{Zhao2016ControllabilityAggregationBCN}.
However, in order not to break the logical equations of the whole BCN, we have to make this stronger assumption.
Under Assumption \ref{assu1:observability_aggregation}, the sub-BCN $\Sig_i$ corresponding to each super node $\Ncal_i$
is of the form \eqref{BCN:ObservabilityAggregation}, $i\in[1,s]$.

\begin{assumption}\label{assu1:observability_aggregation}
	For each $i\in[1,s]$,
\begin{enumerate}
	\item\label{item1:observability_aggregation} 
		(making observing $\Sig_i$ meaningful) $\Ncal_i\cap\Ycal\ne\emptyset$;
		if $\Ncal_i\cap\Xcal\ne\emptyset$, then in $\Ncal_i$, for each state node $x\in\Ncal_i\cap \Xcal$,
		there is a path from $x$ to an output node $y\in\Ncal_i\cap \Ycal$ such that all state nodes in the whole 
		network graph $\cup_{i=1}^{s}\Ncal_i$ that are parents of $y$ belong to $\Ncal_i$.
	\item\label{item3:observability_aggregation}
		(making controlling $\Sig_i$ meaningful)
		If $\Ncal_i$ has a positive indegree, then outside $\Ncal_i$,
		all tails of all edges of the network graph entering  $\Ncal_i$
		are regarded as input nodes of $\Sig_i$. 
		(Note that all these tails are state nodes or input nodes of the network graph.)
\end{enumerate}

\end{assumption}

	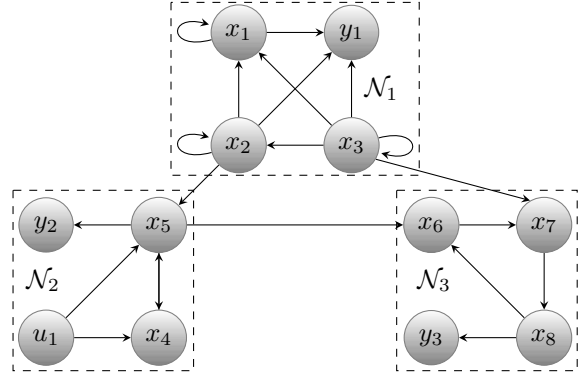
\begin{figure}
		\begin{center}
        \begin{tikzpicture}[->,>=stealth,node distance=1.5cm]
          \tikzstyle{node}=[shape=circle,draw=black!50,top color=white,bottom color=black!50]
          \tikzstyle{input}=[shape=circle,draw=black!50,top color=white,bottom color=black!50]
          \tikzstyle{output}=[shape=circle,draw=black!50,top color=white,bottom color=black!50]%
		  [shape=circle,fill=red!50!blue,draw=none,text=white,inner sep=2pt]
          \tikzstyle{disturbance}=[shape=circle,fill=red!20!blue,draw=none,text=white,inner sep=2pt]
          \node[node] (x1)                 {$x_1$};
          \node[output] (y1) [right of =x1]  {$y_1$};
		  \node[node] (x2) [below of =x1]  {$x_2$};
		  \node[node] (x3) [right of =x2]  {$x_3$};
		  \node[node] (x5) [below left of =x2]  {$x_5$};
		  \node[node] (x4) [below of =x5]  {$x_4$};
		  \node[input] (u1) [left of =x4]  {$u_1$};
		  \node[output] (y2) [above of =u1]  {$y_2$};
		  \node[node] (x6) [below right of =x3]  {$x_6$};
		  \node[node] (x7) [right of =x6]  {$x_7$};
		  \node[node] (x8) [below of =x7]  {$x_8$};
		  \node[output] (y3) [below of =x6]  {$y_3$};
          \path   (x1) edge (y1)
		  		  (x2) edge (x1)
				  (x3) edge (x1)
				  (x2) edge (y1)
				  (x3) edge (y1)
				  (x3) edge (x2)
		  		  (x1) [loop left] edge (x1)
				  (x2) [loop left] edge (x2)
				  (x3) [loop right] edge (x3);
				  \node at (1.9,-0.75) {$\Ncal_1$};
		 \path    (x2) edge (x5.45)
		          (u1) edge (x4)
				  (u1) edge (x5)
				  (x5) edge (y2)
				  (x4) edge (x5)
				  (x5) edge (x4);
				  \node at (-2.6,-3.3) {$\Ncal_2$};
		 \path    (x5.0) edge (x6.180)
		          (x3.-30) edge (x7.120)
				  (x6) edge (x7)
				  (x7) edge (x8)
				  (x8) edge (x6)
				  (x8) edge (y3);
				  \node at (2.6,-3.3) {$\Ncal_3$};
		 \draw [dashed] (-0.9,0.4) rectangle (2.4,-1.9)
		 			    (-3.0,-2.1) rectangle (-0.6,-4.5)
				        (2.1,-2.1) rectangle (4.5,-4.5)
		   ;
        \end{tikzpicture}
		\caption{Example of an aggregation of a BCN with $8$ state nodes, $1$ input node, and $3$ output nodes.}
		\label{fig4:BCNnetworkgraph}
      \end{center}
	\end{figure}

\begin{example}\label{exam1:observability_aggregation}
	Consider the following BCN corresponding to Fig. \ref{fig4:BCNnetworkgraph}.
	\begin{equation}\label{eqn3:observability_aggregation}
		\begin{split}
			&\Sig_1:\left\{  
			\begin{split}
				&x_1(t+1)=x_1(t)\oplus(x_2(t)\odot x_3(t)),\\
				&x_2(t+1)=x_2(t)\oplus x_3(t),\\
				&x_3(t+1)=x_3(t)\oplus 1,\\
				&y_1(t)=x_1(t)\odot(x_2(t)\oplus x_3(t)),
			\end{split}
			\right.\\
			&\Sig_2:\left\{  
			\begin{split}
				&x_4(t+1)=x_5(t)\odot u_1(t),\\
				&x_5(t+1)=x_4(t)\oplus u_1(t)\oplus x_2(t),\\
				&y_2(t)=x_5(t),
			\end{split}
			\right.
			\\
			&\Sig_3:\left\{
			\begin{split}
				&x_6(t+1)=x_8(t)\oplus x_5(t),\\
				&x_7(t+1)=x_6(t)\oplus x_3(t),\\
				&x_8(t+1)=x_7(t),\\
				&y_3(t)=x_8(t),
			\end{split}
			\right.
			\\
		\end{split}
	\end{equation}
	where $t=0,1,\dots$; $x_i(t),u_1(t),y_k(t)\in\D$, $i\in[1,8]$, $k\in[1,3]$.

	In Fig. \ref{fig4:BCNnetworkgraph}, all $\Ncal_1,\Ncal_2,\Ncal_3$ contain output nodes;
	$\Ncal_1$ and $\Ncal_3$ contain no input node; 
	$\Ncal_1$ contains edges $x_1\rightarrow y_1$, $x_2\rightarrow y_1$, and $x_3\rightarrow y_1$; 
	$\Ncal_2$ contains path $x_4\rightarrow x_5\rightarrow y_2$;
	$\Ncal_3$ contains path $x_6\rightarrow x_7\rightarrow  x_8\rightarrow y_3$;
	$x_2$ is an input node of $\Sig_2$; $x_3$ and $x_5$ are input nodes of $\Sig_3$;
	sub-BCNs $\Sig_1,\Sig_2$, and $\Sig_3$ in Eqn. \eqref{eqn3:observability_aggregation} correspond to
	the super nodes $\Ncal_1,\Ncal_2$, and $\Ncal_3$, respectively. Hence this aggregation satisfies Assumption
	\ref{assu1:observability_aggregation}. The corresponding aggregation graph is shown in 
	Fig. \ref{fig8:BCNnetworkgraph}.

	\begin{figure}
		\begin{center}
        \begin{tikzpicture}[->,>=stealth,node distance=1.5cm]
          \tikzstyle{node}=[shape=circle,draw=black!50,top color=white,bottom color=black!50]
          \tikzstyle{input}=[shape=circle,fill=red!80,draw=none,text=white,inner sep=2pt]
          \tikzstyle{output}=[shape=circle,fill=red!50!blue,draw=none,text=white,inner sep=2pt]
          \tikzstyle{disturbance}=[shape=circle,fill=red!20!blue,draw=none,text=white,inner sep=2pt]
          \node[node] (n1)                 {$\Ncal_1$};
          \node[node] (n2) [below left of =n1]  {$\Ncal_2$};
		  \node[node] (n3) [below right of =n1]  {$\Ncal_3$};
		  \path (n1) edge (n2)
		  		(n1) edge (n3)
				(n2) edge (n3)
				;
		  \end{tikzpicture}
		  \caption{Aggregation graph corresponding to Fig. \ref{fig4:BCNnetworkgraph}.}
		\label{fig8:BCNnetworkgraph}
      \end{center}
	\end{figure}
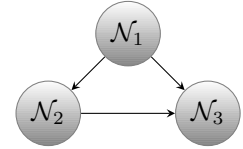
\end{example}

\section{Observability}\label{sec:mainresults_obser}

\subsection{Observability based on network aggregations}

In this subsection, we show whether one can verify the observability of a BCN \eqref{BCN1:ObservabilityAggregation}
via verifying the observability of its sub-BCNs obtained by aggregating its network graph under Assumption
\ref{assu1:observability_aggregation}.
First we investigate whether 
a BCN \eqref{BCN1:ObservabilityAggregation} being observable implies its sub-BCNs 
obtained by aggregating its network graph  under Assumption \ref{assu1:observability_aggregation} also being observable.
The following Example \ref{exam2:observability_aggregation} gives a negative answer.

\begin{example}\label{exam2:observability_aggregation}
	Consider the following BCN corresponding to Fig. \ref{fig5:BCNnetworkgraph}.
		\begin{equation}\label{eqn4:observability_aggregation}
		\begin{split}
			&\Sig_1:\left\{  
			\begin{split}
				&x_1(t+1)=u_1(t)\oplus x_2(t),\\
				&y_1(t)=x_1(t),
			\end{split}
			\right.\\
			&\Sig_2:\left\{  
			\begin{split}
				&x_2(t+1)=u_2(t)\oplus x_1(t),\\
				&x_3(t+1)=u_2(t)\oplus x_4(t),\\
				&y_2(t)=x_2(t)\odot x_3(t),
			\end{split}
			\right.
			\\
			&\Sig_3:\left\{  
			\begin{split}
				&x_4(t+1)=u_3(t)\oplus x_3(t),\\
				&y_3(t)=x_4(t),
			\end{split}
			\right.
			\\
		\end{split}
	\end{equation}
	where $t=0,1,\dots$; $x_i(t),u_j(t),y_k(t)\in\D$,
	$i\in[1,4]$, $j,k\in[1,3]$.
	
	It is not difficult to see that the aggregation in Fig. \ref{fig5:BCNnetworkgraph} satisfies Assumption
	\ref{assu1:observability_aggregation}.
	And sub-BCNs $\Sig_1,\Sig_2,\Sig_3$ in Eqn. \eqref{eqn4:observability_aggregation} correspond to
	the super nodes $\Ncal_1,\Ncal_2,\Ncal_3$ in Fig. \ref{fig5:BCNnetworkgraph}, respectively.
	$\Sig_1$ is observable, because $x_1(0)=y_1(0)$, and $y_1(0)$ can be observed. 
	Symmetrically $\Sig_3$ is also observable. In the OWPG of $\Sig_2$, we have an edge
	$\{00,01\}\xrightarrow[]{000}\{00,00\}$ from a non-diagonal vertex $\{00,01\}$ to a diagonal vertex
	$\{00,00\}$. Then by Corollary \ref{cor1:observability_aggregation}, $\Sig_2$ is not observable.
	Now consider the whole BCN \eqref{eqn4:observability_aggregation}. We have $x_1(0)=y_1(0)$,
	$x_2(0)=x_1(1)\oplus u_1(0)=y_1(1)\oplus u_1(0)$, $x_3(0)=x_4(1)\oplus u_3(0)=y_3(1)\oplus u_3(0)$, 
	$x_4(0)=y_3(0)$, $y_1(0),y_1(1),y_3(0),y_3(1)$ can be observed, $u_1(0)$ and $u_3(0)$ can be designed,
	hence \eqref{eqn4:observability_aggregation} is observable.

		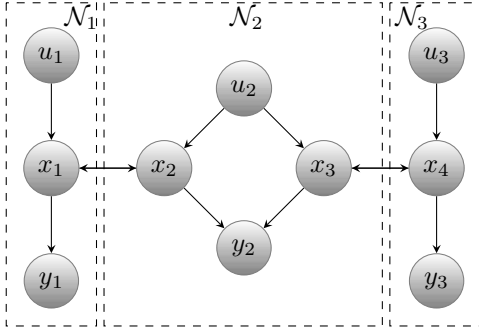
\begin{figure}
		\begin{center}
        \begin{tikzpicture}[->,>=stealth,node distance=1.5cm]
          \tikzstyle{node}=[shape=circle,draw=black!50,top color=white,bottom color=black!50]
          \tikzstyle{input}=[shape=circle,draw=black!50,top color=white,bottom color=black!50]
          \tikzstyle{output}=[shape=circle,draw=black!50,top color=white,bottom color=black!50]
          \tikzstyle{disturbance}=[shape=circle,fill=red!20!blue,draw=none,text=white,inner sep=2pt]
          \node[node] (x1)                 {$x_1$};
          \node[output] (y1) [below of =x1]  {$y_1$};
		  \node[input] (u1) [above of =x1]  {$u_1$};
		  \node[node] (x2) [right of =x1]  {$x_2$};
		  \node[input] (u2) [above right of = x2] {$u_2$};
		  \node[node] (x3) [below right of = u2] {$x_3$};
		  \node[output] (y2) [below left of =x3] {$y_2$};
		  \node[node] (x4) [right of =x3] {$x_4$};
		  \node[input] (u3) [above of =x4]  {$u_3$};
		  \node[output] (y3) [below of =x4]  {$y_3$};

		  \path (u1) edge (x1)
		  		(x1) edge (y1)
				(u3) edge (x4)
				(x4) edge (y3)
				(x2) edge (x1)
				(x1) edge (x2)
				(x3) edge (x4)
				(x4) edge (x3)
				(u2) edge (x2)
				(u2) edge (x3)
				(x2) edge (y2)
				(x3) edge (y2)
		  ;
		  \draw [dashed] (-0.6,2.2) rectangle (0.6,-2.1)
		  				 (0.7,2.2) rectangle (4.4,-2.1)
		  				 (4.5,2.2) rectangle (5.7,-2.1)
		  ;
		  \node at (0.4,2.0) {$\Ncal_1$};
		  \node at (2.6,2.0) {$\Ncal_2$};
		  \node at (4.8,2.0) {$\Ncal_3$};
		  \end{tikzpicture}
		\caption{Example of an aggregation of a BCN with $4$ state nodes, $3$ input nodes, and $3$ output nodes.}
		\label{fig5:BCNnetworkgraph}
      \end{center}
	\end{figure}

\end{example}

The aggregation graph corresponding to Fig. \ref{fig5:BCNnetworkgraph} contains cycles $\Ncal_1\leftrightarrow\Ncal_2$
and $\Ncal_2\leftrightarrow\Ncal_3$.
Then if an aggregation of a BCN \eqref{BCN1:ObservabilityAggregation} contains no cycle and
satisfies Assumption \ref{assu1:observability_aggregation}, is it true that 
a BCN \eqref{BCN1:ObservabilityAggregation} being observable implies its sub-BCNs 
also being observable?
The following Example \ref{exam4:observability_aggregation} gives a negative answer again.
\begin{example}\label{exam4:observability_aggregation}
	Consider the following BCN corresponding to Fig. \ref{fig7:BCNnetworkgraph}.
		\begin{equation}\label{eqn6:observability_aggregation}
		\begin{split}
			&\Sig_1:\left\{  
			\begin{split}
				&x_1(t+1)=x_2(t)\odot u_1(t),\\
				&x_2(t+1)=x_1(t),\\
				&y_1(t)=x_1(t),
			\end{split}
			\right.\\
			&\Sig_2:\left\{  
			\begin{split}
				&x_3(t+1)=x_2(t),\\
				&x_4(t+1)=x_3(t),\\
				&y_2(t)=x_4(t),
			\end{split}
			\right.
			\\
		\end{split}
	\end{equation}
	where $t=0,1,\dots$; $x_i(t),u_1(t),y_k(t)\in\D$, $i\in[1,4]$, $k\in[1,2]$.
	
	The aggregation shown in Fig. \ref{fig7:BCNnetworkgraph} satisfies Assumption
	\ref{assu1:observability_aggregation}, and the corresponding aggregation
	graph $\Ncal_1\rightarrow\Ncal_2$ contains no cycle.
	And sub-BCNs $\Sig_1,\Sig_2$ in Eqn. \eqref{eqn6:observability_aggregation} correspond to
	the super nodes $\Ncal_1,\Ncal_2$ in Fig. \ref{fig7:BCNnetworkgraph}, respectively.
	In the OWPG of $\Sig_1$, we have an edge $\{10,11\}\xrightarrow[]{0}\{01,01\}$ 
	from a non-diagonal vertex $\{10,11\}$ to a diagonal vertex $\{01,01\}$, then by Corollary
	\ref{cor1:observability_aggregation}, $\Sig_1$ is not observable.
	$\Sig_2$ is observable because $x_4(0)=y_2(0)$, $x_3(0)=x_4(1)=y_2(1)$,
	$y_2(0)$ and $y_2(1)$ can be observed. The whole BCN \eqref{eqn6:observability_aggregation} is
	observable because $x_1(0)=y_1(0)$, $x_2(0)=x_3(1)=x_4(2)=y_2(2)$,
	$x_3(0)=x_4(1)=y_2(1)$, $x_4(0)=y_2(0)$, 
	$y_1(0),y_2(0),y_2(1),$ and $y_2(2)$ can be observed.

		\begin{figure}
		\begin{center}
        \begin{tikzpicture}[->,>=stealth,node distance=1.5cm]
          \tikzstyle{node}=[shape=circle,draw=black!50,top color=white,bottom color=black!50]
          \tikzstyle{input}=[shape=circle,draw=black!50,top color=white,bottom color=black!50]
          \tikzstyle{output}=[shape=circle,draw=black!50,top color=white,bottom color=black!50]
          \tikzstyle{disturbance}=[shape=circle,fill=red!20!blue,draw=none,text=white,inner sep=2pt]
          \node[node] (x2)                 {$x_2$};
		  \node[node] (x1) [right of =x1]  {$x_1$};
          \node[output] (y1) [right of =x1]  {$y_1$};
		  \node[node] (x3) [below of =x2]  {$x_3$};
		  \node[node] (x4) [below of =x1]  {$x_4$};
		  \node[output] (y2) [right of =x4] {$y_2$};
		  \node[input] (u1) [above of =x1] {$u_1$};
		  \path 
				(x1) edge (x2)
				(x2) edge (x1)
		  		(x2) edge (x3)
				(x3) edge (x4)
				(x1) edge (y1)
				(x4) edge (y2)
				(u1) edge (x1)
		  ;
		  \draw [dashed] (-0.5,2.1) rectangle (3.5,-0.7)
		  				 (-0.5,-0.9) rectangle (3.5,-2.1)
		  ;
		  \node at (2.5,-0.5) {$\Ncal_1$};
		  \node at (2.5,-1.1) {$\Ncal_2$};
		  \end{tikzpicture}
		\caption{Example of an aggregation of a BCN with $4$ state nodes, $1$ input node, and $2$ output nodes.}
		\label{fig7:BCNnetworkgraph}
      \end{center}
	\end{figure}
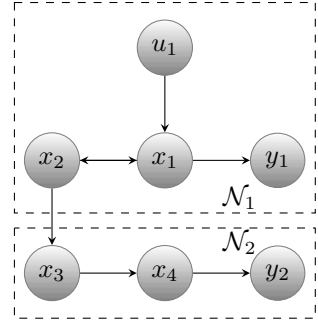

\end{example}

Next we discuss the opposite direction. That is, if an aggregation satisfies Assumption 
\ref{assu1:observability_aggregation}, whether all resulting sub-BCNs
being observable implies the whole BCN also being observable. Unfortunately, the answer is still negative.
The following Example \ref{exam3:observability_aggregation}
shows such an aggregation satisfying Assumption \ref{assu1:observability_aggregation},
containing a cycle and  satisfying that, even if all resulting sub-BCNs are
observable the whole BCN is not observable.

\begin{example}\label{exam3:observability_aggregation}
	Consider the following BCN corresponding to Fig. \ref{fig6:BCNnetworkgraph}.
	\begin{equation}\label{eqn5:observability_aggregation}
		\begin{split}
			&\Sig_1:\left\{  
			\begin{split}
				&x_1(t+1)=u_1(t),\\
				&x_2(t+1)=x_1(t)\oplus x_4(t),\\
				&y_1(t)=x_2(t),
			\end{split}
			\right.\\
			&\Sig_2:\left\{  
			\begin{split}
				&x_3(t+1)=x_1(t)\oplus x_4(t)\oplus 1,\\
				&x_4(t+1)=u_2(t),\\
				&y_2(t)=x_3(t),
			\end{split}
			\right.
			\\
		\end{split}
	\end{equation}
	where $t=0,1,\dots$; $x_i(t),u_j(t),y_k(t)\in\D$, $i\in[1,4]$, $j,k\in[1,2]$.
	
	The aggregation shown in Fig. \ref{fig6:BCNnetworkgraph} satisfies Assumption \ref{assu1:observability_aggregation},
	and its aggregation graph is a cycle $\Ncal_1\leftrightarrow\Ncal_2$.
	For $\Sig_1$, $x_2(0)=y_1(0)$, $x_1(0)=x_2(1)\oplus x_4(0)=y_1(1)\oplus x_4(0)$. 
	Since $y_1(0)$ and $y_1(1)$ can be observed and $x_4(0)$ is designable, $\Sig_1$ is observable.
	Similarly $\Sig_2$ is also observable. 
	Consider a non-diagonal vertex $\{0110,1111\}$ of the OWPG of \eqref{eqn5:observability_aggregation},
	we have an edge $\{0110,1111\}\xrightarrow[]{u_1u_2}\{u_101u_2,u_101u_2\}$, where 
	$u_1,u_2\in\D$, and $\{u_101u_2,u_101u_2\}$ is a diagonal vertex of the OWPG. Hence by Corollary 
	\ref{cor1:observability_aggregation}, \eqref{eqn5:observability_aggregation} is not observable.

		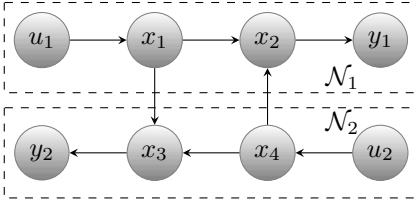
\begin{figure}
		\begin{center}
        \begin{tikzpicture}[->,>=stealth,node distance=1.5cm]
          \tikzstyle{node}=[shape=circle,draw=black!50,top color=white,bottom color=black!50]
          \tikzstyle{input}=[shape=circle,draw=black!50,top color=white,bottom color=black!50]
          \tikzstyle{output}=[shape=circle,draw=black!50,top color=white,bottom color=black!50]
          \tikzstyle{disturbance}=[shape=circle,fill=red!20!blue,draw=none,text=white,inner sep=2pt]
          \node[node] (x1)                 {$x_1$};
          \node[node] (x2) [right of =x1]  {$x_2$};
		  \node[node] (x3) [below of =x1]  {$x_3$};
		  \node[node] (x4) [right of =x3]  {$x_4$};
		  \node[output] (y2) [left of =x3] {$y_2$};
		  \node[input] (u1) [left of =x1] {$u_1$};
		  \node[output] (y1) [right of =x2] {$y_1$};
		  \node[input] (u2) [right of =x4] {$u_2$};
		  \path (x1) edge (x2)
		  		(x1) edge (x3)
				(x4) edge (x2)
		  		(x4) edge (x3)
				(u1) edge (x1)
				(x2) edge (y1)
				(x3) edge (y2) 
				(u2) edge (x4)
				;
		  \draw [dashed] (-2.0,0.5) rectangle (3.5,-0.7)
		  				 (-2.0,-0.9) rectangle (3.5,-2.1)
		  ;
		  \node at (2.5,-0.5) {$\Ncal_1$};
		  \node at (2.5,-1.1) {$\Ncal_2$};
		  \end{tikzpicture}
		\caption{Example of an aggregation of a BCN with $4$ state nodes, $2$ input nodes, and $2$ output nodes.}
		\label{fig6:BCNnetworkgraph}
      \end{center}
	\end{figure}
\end{example}

These two types of negative results show that observability possesses more complex properties than controllability,
as it is proved in \cite{Zhao2016ControllabilityAggregationBCN} that if the whole BCN is controllable, 
then all resulting sub-BCNs are controllable under a weaker assumption than Assumption 
\ref{assu1:observability_aggregation} stating that
each super node contains at least one state node.

These negative results seem to presage that one cannot use the aggregation method to verify observability.
However, the situation finally becomes positive.
Next we show when an aggregation satisfies 
Assumption \ref{assu1:observability_aggregation} and contains no cycle, all resulting sub-BCNs
being observable implies the whole BCN also being observable!
These results are put into the following subsection.

\subsection{Observability based on acyclic aggregations}

An aggregation \eqref{partition:observability_aggregation} is called acyclic if its aggregation graph contains no cycle.
For example, the aggregation shown in Fig. \ref{fig4:BCNnetworkgraph} (its aggregation graph is depicted in Fig.
\ref{fig8:BCNnetworkgraph}) is acyclic.

\begin{theorem}\label{thm1:observability_aggregation}
	Consider a BCN \eqref{BCN1:ObservabilityAggregation} that has
	an acyclic aggregation \eqref{partition:observability_aggregation} satisfying Assumption 
	\ref{assu1:observability_aggregation}.
	If all resulting sub-BCNs 
	are observable then
	the whole BCN is also observable.
\end{theorem}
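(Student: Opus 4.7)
The plan is to exploit the acyclicity by processing the super nodes in topological order and arguing by contradiction on the whole BCN's observability. Since the aggregation graph is a DAG, I would fix a topological ordering $\Ncal_{i_1},\dots,\Ncal_{i_s}$ so that every inter-part edge goes from $\Ncal_{i_j}$ to $\Ncal_{i_k}$ with $j<k$. Suppose for contradiction that two distinct initial states $x_0\ne x_0'$ and a shared input sequence $\{u_t\}_{t\ge 0}$ produce identical output sequences $\{y_t\}_{t\ge 0}$, and let $k^*$ be the smallest index where the restrictions of $x_0$ and $x_0'$ to the state nodes inside $\Ncal_{i_{k^*}}$ differ.

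The core step will be an induction on $k<k^*$ showing that the trajectories of the state nodes in $\Ncal_{i_k}$ under $x_0$ and under $x_0'$ coincide at every time step. This relies on Assumption~\ref{assu1:observability_aggregation}: the update equations of $\Sig_{i_k}$ are driven only by nodes inside $\Ncal_{i_k}$ together with external tails of edges entering $\Ncal_{i_k}$, which by the topological ordering must originate in super nodes $\Ncal_{i_j}$ with $j<k$. Trajectories in those earlier parts already agree by the induction hypothesis, the original input values agree trivially, and by minimality of $k^*$ the initial state of $\Sig_{i_k}$ agrees as well, so determinism of the dynamics propagates the agreement forward in time.

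Then I would apply observability to $\Sig_{i_{k^*}}$: its external input sequence, consisting of values drawn from original inputs and from earlier-part state trajectories, is identical under $x_0$ and $x_0'$ by the previous step; and Assumption~\ref{assu1:observability_aggregation} item~\ref{item1:observability_aggregation} ensures that every output node in $\Ncal_{i_{k^*}}\cap\Ycal$ has an output equation depending only on state nodes inside $\Ncal_{i_{k^*}}$, so the output sequence of $\Sig_{i_{k^*}}$ is simply the projection of $\{y_t\}$ onto the corresponding output coordinates, which is identical under both initial states. But by the choice of $k^*$ the restrictions of $x_0$ and $x_0'$ to $\Sig_{i_{k^*}}$'s state nodes differ, contradicting the observability of $\Sig_{i_{k^*}}$ through Definition~\ref{def1:observability_aggregation}.

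The main obstacle I anticipate is the bookkeeping around what plays the role of an ``input'' for each $\Sig_i$: original input nodes may lie inside the current part or in an earlier part, and state nodes from earlier parts appear as exogenous inputs of the current sub-BCN. Verifying that all these categories take identical values under $x_0$ and $x_0'$ up to step $k^*$ is exactly what the topological induction delivers, and Assumption~\ref{assu1:observability_aggregation} is what makes these local inputs and outputs well-defined so that Definition~\ref{def1:observability_aggregation} applies cleanly to each sub-BCN. A secondary care point is that the definition demands distinguishability for \emph{every} input sequence, so the particular input sequence induced on $\Sig_{i_{k^*}}$ by the ambient choice of $\{u_t\}$ and the earlier trajectories is automatically covered.
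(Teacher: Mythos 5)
Your proposal is correct and follows essentially the same route as the paper's proof: the paper likewise extracts a topological (cascading) ordering of the acyclic aggregation graph, takes the first part in that ordering where the two initial states differ, notes that all earlier parts (and hence the exogenous inputs fed into that part) evolve identically, and then invokes the observability of that single sub-BCN to separate the output sequences. Your version merely phrases the argument by contradiction and spells out the forward-propagation induction that the paper leaves implicit.
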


\begin{proof}
	First we show that for an acyclic aggregation \eqref{partition:observability_aggregation},
	there is a reordering (i.e., a bijective) $\tau:[1,s]\to[1,s]$ such that for each $i\in[1,s]$,
	\begin{equation}\label{acyclic_partition:observability_aggregation}
		\Ncal^i:=\bigcup_{j=1}^{i}\Ncal_{\tau(j)}
	\end{equation} has zero indegree.

	Since the aggregation is acyclic, each subgraph of the aggregation graph $\Gr$ has a super node with indegree $0$.
	Suppose on the contrary that a subgraph $G$ of $\Gr$ has all super nodes with positive indegrees.
	Construct a new graph $G'$ by reversing the directions of all edges of $G$. Then $G'$ has a cycle,
	since $G'$ has finitely many nodes, and each node has a positive outdegree.
	Then $G$ and hence $\Gr$ have a cycle, which is a contradiction.

	Choose $k_1\in[1,s]$ such that $\Ncal_{k_1}$ has zero indegree in $\Gr$, remove $\Ncal_{k_1}$
	and all edges leaving $\Ncal_{k_1}$ from $\Gr$, and set $\tau(1):=k_1$.
	Then in the new $\Gr$, there is $k_2\in[1,s]\setminus\{k_1\}$ such that $\Ncal_{k_2}$ has zero indegree.
	Remove $\Ncal_{k_2}$ 
	and all edges leaving $\Ncal_{k_2}$ from the new $\Gr$, and set $\tau(2):=k_2$.
	Repeat this procedure until $\Gr$ becomes empty, we obtain a bijective $\tau:[1,s]\to[1,s]$ such that
	\eqref{acyclic_partition:observability_aggregation} holds.

	Second we show that if each sub-BCN $\Sig_i$ corresponding to $\Ncal_i$
	is observable then the whole BCN is also observable.
	Suppose for a BCN \eqref{BCN1:ObservabilityAggregation} that each resulting sub-BCN $\Sig_i$ is 
	observable, $i\in[1,s]$. Then for each given input sequence $\{u_0,u_1,\dots\}\subset\D^{m}$, 
	for all given different initial states $x_0,x_0'\in\D^n$, there is $k\in[1,s]$ such that 
	the components of $x_0,x_0'$ in $\Ncal_{\tau(k)}$ are not equal, and 
	the components of $x_0,x_0'$ in $\Ncal_{\tau(i)}$ are equal, $i\in[1,k-1]$.
	Note that
	\begin{equation}\label{eqn7:observability_aggregation}
		\begin{split}
			&\text{in the network graph, for all }i,j\in[1,s],\\&\text{if there exist node }
			v\in \Ncal_{\tau(i)}\text{ and node }v'\in\Ncal_{\tau(j)}\\
			&\text{such that }v\text{ affects }v'\text{ then }i\le j.
		\end{split}
	\end{equation}
	Then
	since sub-BCN $\Sig_{\tau(k)}$ is observable, the output sequences of $\Sig_{\tau(k)}$
	corresponding to the components of 
	$x_0,x_0'$ and input sequence $\{u_0,u_1,\dots\}$ in $\Ncal_{\tau(k)}$ are different.
	That is, the whole BCN is observable.
\end{proof}

In \cite{Zhao2016ControllabilityAggregationBCN}, an aggregation \eqref{partition:observability_aggregation}
satisfying \eqref{acyclic_partition:observability_aggregation} is called cascading;
and it is pointed out that each cascading aggregation is acyclic, which can also be seen by 
\eqref{eqn7:observability_aggregation}. Hence the following proposition follows from this property
and the proof of Theorem \ref{thm1:observability_aggregation}.
\begin{proposition}\label{prop2:observability_aggregation}
	An aggregation \eqref{partition:observability_aggregation} is acyclic if and only if it is cascading.
\end{proposition}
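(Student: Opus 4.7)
The proposition is a two-way equivalence, and both directions have essentially been established in passing earlier in the paper, so my plan is mainly to assemble those ingredients into a clean argument.

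For the direction \emph{acyclic} $\Rightarrow$ \emph{cascading}, I would simply invoke the construction carried out in the first part of the proof of Theorem \ref{thm1:observability_aggregation}. The key fact used there is the standard observation that any finite acyclic directed graph has a vertex of indegree $0$ (a source): otherwise, reversing all edges yields a finite digraph in which every vertex has positive outdegree, which must contain a cycle, and cycles are preserved under edge reversal. Applied to the aggregation graph $\Gr$, this lets me select $\Ncal_{\tau(1)}$ with zero indegree, delete it together with its outgoing edges, and repeat on the resulting (still acyclic) subgraph to obtain $\Ncal_{\tau(2)}, \Ncal_{\tau(3)}, \dots$. By construction, at each stage $i$ no edge enters $\bigcup_{j=1}^{i}\Ncal_{\tau(j)}$ from its complement, which is exactly the cascading condition \eqref{acyclic_partition:observability_aggregation}.

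For the direction \emph{cascading} $\Rightarrow$ \emph{acyclic}, my plan is to extract the topological-order consequence already displayed as \eqref{eqn7:observability_aggregation}. Concretely, if the aggregation is cascading with ordering $\tau$, then for every edge of the aggregation graph from $\Ncal_{\tau(i)}$ to $\Ncal_{\tau(j)}$ one must have $i < j$ (if $i \ge j$, this edge would enter the zero-indegree block $\Ncal^{j-1}$, which is impossible when $i\ge j$ since $\tau(i)\notin\{\tau(1),\dots,\tau(j-1)\}$; more directly, the contrapositive of \eqref{eqn7:observability_aggregation} gives the ordering). Then any cycle $\Ncal_{\tau(i_1)} \to \Ncal_{\tau(i_2)} \to \cdots \to \Ncal_{\tau(i_\ell)} \to \Ncal_{\tau(i_1)}$ would force $i_1 < i_2 < \cdots < i_\ell < i_1$, a contradiction, so $\Gr$ is acyclic.

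I do not anticipate a serious obstacle: both halves are direct consequences of standard topological-sort reasoning together with the source-existence lemma for finite acyclic digraphs, and the bookkeeping has already been done in the proof of Theorem \ref{thm1:observability_aggregation}. The only mildly delicate point is to make the correspondence between ``$\Ncal^i$ has zero indegree'' (the definition of cascading used in \eqref{acyclic_partition:observability_aggregation}) and ``no edge of $\Gr$ goes from $\Ncal_{\tau(j)}$ with $j>i$ into $\Ncal_{\tau(j')}$ with $j'\le i$'' explicit, so that the ordering claim used in the second direction is unambiguously justified.
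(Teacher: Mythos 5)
Your proof is correct and follows essentially the same route as the paper: the acyclic-to-cascading direction is exactly the source-peeling construction from the first part of the proof of Theorem \ref{thm1:observability_aggregation}, and the cascading-to-acyclic direction is the topological-order observation recorded in \eqref{eqn7:observability_aggregation}, which is precisely what the paper cites when it states that the proposition "follows from this property and the proof of Theorem \ref{thm1:observability_aggregation}."
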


\begin{example}\label{exam5:observability_aggregation}
	Recall Example \ref{exam1:observability_aggregation}.
	The aggregation shown in Fig. \ref{fig4:BCNnetworkgraph} 
	is acyclic and satisfies Assumption \ref{assu1:observability_aggregation}.
	Next we show that the resulting sub-BCNs 
	$\Sig_1,\Sig_2,\Sig_3$ in \eqref{eqn3:observability_aggregation} are all observable.
	Then by Theorem \ref{thm1:observability_aggregation}, the whole BCN 
	\eqref{eqn3:observability_aggregation} is also observable.

	The OWPG of $\Sig_1$ has $8$ diagonal vertices, and $1+C_6^2=16$ non-diagonal vertices.
	The OWPG of $\Sig_1$ is shown in Fig. \ref{fig9:BCNnetworkgraph}.
	In Fig. \ref{fig9:BCNnetworkgraph}, there exists no path from a non-diagonal vertex to a diagonal vertex,
	and there exists no cycle in the subgraph generated by non-diagonal vertices.
	By Proposition \ref{prop1:observability_aggregation}, $\Sig_1$ is observable.

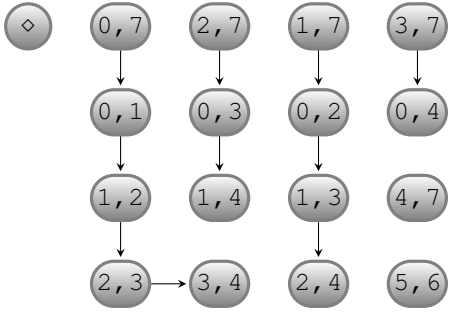
\begin{figure}
        \centering
\begin{tikzpicture}
	[shorten >=1pt,auto,node distance=1.5 cm, scale = 0.8, transform shape,
	->,>=stealth,inner sep=2pt,state/.style={
	rectangle,minimum size=6mm,rounded corners=3mm,
	very thick,draw=black!50,
	top color=white,bottom color=black!50,font=\ttfamily},
	point/.style={rectangle,inner sep=0pt,minimum size=2pt,fill=}]
	\matrix [nodes={fill=blue!20},column sep=0.5cm, row sep =0.5cm]
	{
	\node[state] {$\diamond$};
	&\node[state] (07) {0,7}; & \node[state] (27) {2,7}; & \node[state] (17) {1,7}; & \node[state] (37) {3,7}; \\
	&\node[state] (01) {0,1}; & \node[state] (03) {0,3}; & \node[state] (02) {0,2}; & \node[state] (04) {0,4}; \\
	&\node[state] (12) {1,2}; & \node[state] (14) {1,4}; & \node[state] (13) {1,3}; & \node[state] (47) {4,7}; \\
	&\node[state] (23) {2,3}; & \node[state] (34) {3,4}; & \node[state] (24) {2,4}; & \node[state] (46) {5,6}; \\
	};
	\path [->] (07) edge (01) 
	  		   (27) edge (03)
			   (17) edge (02)
   			   (37) edge (04)
			   (01) edge (12) 
	  		   (03) edge (14)
			   (02) edge (13)
   			   (13) edge (24)
			   (12) edge (23)
   			   (23) edge (34)
	;

        \end{tikzpicture}
		\caption{Observability weighted pair graph of the sub-BCN $\Sig_1$ in \eqref{eqn3:observability_aggregation},
		where $\diamond$ denotes the subgraph generated by all diagonal vertices, 
		numbers in circles are decimal representations for states of $\Sig_1$, formally,
		$0\sim 000$, $1\sim 001$, $2\sim 010$, $3\sim 011$, $4\sim 100$, $5\sim 101$, $6\sim 110$, $7\sim 111$.}
		\label{fig9:BCNnetworkgraph}
\end{figure}

	For $\Sig_2$, $x_5(0)=y_2(0)$, $x_4(0)=x_5(1)\oplus u_1(0)\oplus x_2(0)=y_2(1)\oplus u_1(0)\oplus x_2(0)$.
	$y_2(0)$ and $y_2(1)$ can be observed and $u_1(0)$ and $x_2(0)$ are designable, hence $\Sig_2$ is observable.

	For $\Sig_3$, $x_8(0)=y_3(0)$, $x_7(0)=x_8(1)=y_3(1)$, $x_6(0)=x_7(1)\oplus x_3(0)=x_8(2)\oplus x_3(0)=y_3(2)\oplus x_3(0)$.
	$y_3(0),y_3(1),$ and $y_3(2)$ can be observed and $x_3(0)$ is designable, hence $\Sig_3$ is also observable.

	The whole BCN \eqref{eqn3:observability_aggregation} has $2^8=256$ states, $2$ inputs, and $2^3=8$ outputs.
	Its OWPG has $(((1+C_6^2)*2+2^3)(2*2+2^2)((2C_4^2)*2+2^3)-2^8)/2=4992$ non-diagonal vertices,
	and $2^8=256$ diagonal vertices.
	It is much more complex to directly 
	use Proposition \ref{prop1:observability_aggregation} to check the observability
	of \eqref{eqn3:observability_aggregation} than using Theorem \ref{thm1:observability_aggregation}
	and Proposition \ref{prop1:observability_aggregation} to do it as above.
\end{example}


\subsection{Complexity analysis}

We analyze the computational complexity of using Theorem \ref{thm1:observability_aggregation}
and Proposition \ref{prop1:observability_aggregation} to determine the observability of the BCN
\eqref{BCN1:ObservabilityAggregation}. Following this way, we first find an acyclic aggregation 
of \eqref{BCN1:ObservabilityAggregation} that satisfies Assumption \ref{assu1:observability_aggregation},
then check the observability of all resulting sub-BCNs. If all resulting sub-BCNs are observable
then the whole BCN is observable.
Assume we have obtained an acyclic aggregation having $k$ parts with almost the same size and satisfying Assumption 
\ref{assu1:observability_aggregation}. Then each part approximately has $\frac{n}{k}$
state nodes and $\frac{m}{k}$ input nodes. The computational complexity is approximately
$k2^{\frac{2n+m}{k}-1}$ by Proposition \ref{prop1:observability_aggregation}.
For large-scale BCNs, $2n+m$ is huge. When $k<l(2n+m)$ for some positive
constant $l$, function $k2^{\frac{2n+m}{k}-1}$ is decreasing. Hence roughly speaking, 
the more parts an aggregation has and the more close the sizes of these parts are, the more effective the 
aggregation method is. It is hard to find such aggregations whose parts have approximately the same size,
but we can find aggregations having sufficiently many parts.
According to this rule, when aggregating a large-scale BCN, 
in order to reduce computational complexity as much as possible, one should 
make the parts as small as possible.

\section{Reconstructibility}\label{sec:mainresults_recon}

In this section, we study whether the aggregation method can be used to deal with the reconstructibility
of a large-scale BCN \eqref{BCN1:ObservabilityAggregation}. Partially due to the similarity between observability
and reconstructibility, similar results for reconstructibility are obtained.

First we give counterexamples to show that the whole BCN \eqref{BCN1:ObservabilityAggregation} being reconstructible
does not imply all resulting sub-BCNs being reconstructible, no matter whether an aggregation has a cycle;
and all resulting sub-BCNs being reconstructible does not imply the whole BCN being reconstructible if an aggregation has a cycle.

\begin{example}\label{exam1:reconstructibility_aggregation}
	Consider the following BCN corresponding to Fig. \ref{fig14:BCNnetworkgraph}.
		\begin{equation}\label{eqn3:reconstructibility_aggregation}
		\begin{split}
			&\Sig_1:\left\{  
			\begin{split}
				&x_1(t+1)=u_1(t)\oplus x_2(t),\\
				&y_1(t)=x_1(t),
			\end{split}
			\right.\\
			&\Sig_2:\left\{  
			\begin{split}
				&x_2(t+1)=x_3(t),\\
				&x_3(t+1)=x_2(t),\\
				&y_2(t)=x_2(t)\odot x_3(t),
			\end{split}
			\right.
			\\
			&\Sig_3:\left\{  
			\begin{split}
				&x_4(t+1)=u_2(t)\oplus x_3(t),\\
				&y_3(t)=x_4(t),
			\end{split}
			\right.
			\\
		\end{split}
	\end{equation}
	where $t=0,1,\dots$; $x_i(t),u_j(t),y_k(t)\in\D$,
	$i\in[1,4]$, $j\in[1,2]$, $k\in[1,3]$.
	
	The acyclic aggregation shown in Fig. \ref{fig14:BCNnetworkgraph} satisfies Assumption
	\ref{assu1:observability_aggregation}. In Example \ref{exam2:observability_aggregation}, we have 
	shown that $\Sig_1$ and $\Sig_3$ are both observable, hence they are both reconstructible.
	For $\Sig_2$, in its RWPG, there is a self-loop on non-diagonal vertex $\{10,01\}$, then by Proposition 
	\ref{prop1:reconstructibility_aggregation}, $\Sig_2$ is not reconstructible.
	Now consider the whole BCN \eqref{eqn3:reconstructibility_aggregation}. We have $x_1(0)=y_1(0)$,
	$x_2(0)=x_1(1)\oplus u_1(0)=y_1(1)\oplus u_1(0)$, $x_3(0)=x_4(1)\oplus u_2(0)=y_3(1)\oplus u_2(0)$, 
	$x_4(0)=y_3(0)$, $y_1(0),y_1(1),y_3(0),y_3(1)$ can be observed, $u_1(0)$ and $u_2(0)$ can be designed,
	hence \eqref{eqn3:reconstructibility_aggregation} is observable, and hence reconstructible.

		\begin{figure}
		\begin{center}
        \begin{tikzpicture}[->,>=stealth,node distance=1.5cm]
          \tikzstyle{node}=[shape=circle,draw=black!50,top color=white,bottom color=black!50]
          \tikzstyle{input}=[shape=circle,draw=black!50,top color=white,bottom color=black!50]
          \tikzstyle{output}=[shape=circle,draw=black!50,top color=white,bottom color=black!50]
          \tikzstyle{disturbance}=[shape=circle,fill=red!20!blue,draw=none,text=white,inner sep=2pt]
          \node[node] (x1)                 {$x_1$};
          \node[output] (y1) [below of =x1]  {$y_1$};
		  \node[input] (u1) [above of =x1]  {$u_1$};
		  \node[node] (x2) [right of =x1]  {$x_2$};
		  \node[output] (y2) [below right of =x2] {$y_2$};
		  \node[node] (x3) [above right of = y2] {$x_3$};
		  \node[node] (x4) [right of =x3] {$x_4$};
		  \node[input] (u2) [above of =x4]  {$u_2$};
		  \node[output] (y3) [below of =x4]  {$y_3$};

		  \path (u1) edge (x1)
		  		(x1) edge (y1)
				(u2) edge (x4)
				(x4) edge (y3)
				(x2) edge (x1)
				(x3) edge (x4)
				(x3) edge (x2)
				(x2) edge (x3)
				(x2) edge (y2)
				(x3) edge (y2)
		  ;
		  \draw [dashed] (-0.6,2.2) rectangle (0.6,-2.1)
		  				 (0.7,2.2) rectangle (4.4,-2.1)
		  				 (4.5,2.2) rectangle (5.7,-2.1)
		  ;
		  \node at (0.4,2.0) {$\Ncal_1$};
		  \node at (2.6,2.0) {$\Ncal_2$};
		  \node at (4.8,2.0) {$\Ncal_3$};
		  \end{tikzpicture}
		\caption{Example of an aggregation of a BCN with $4$ state nodes, $2$ input nodes, and $3$ output nodes.}
		\label{fig14:BCNnetworkgraph}
      \end{center}
	\end{figure}

\end{example}

\begin{example}\label{exam2:reconstructibility_aggregation}
	Consider the following BCN corresponding to Fig. \ref{fig15:BCNnetworkgraph}.
		\begin{equation}\label{eqn4:reconstructibility_aggregation}
		\begin{split}
			&\Sig_1:\left\{  
			\begin{split}
				&x_1(t+1)=u_1(t)\oplus x_2(t),\\
				&y_1(t)=x_1(t),
			\end{split}
			\right.\\
			&\Sig_2:\left\{  
			\begin{split}
				&x_2(t+1)=x_1(t)\odot x_3(t),\\
				&x_3(t+1)=x_4(t)\odot x_2(t),\\
				&y_2(t)=x_2(t)\odot x_3(t),
			\end{split}
			\right.
			\\
			&\Sig_3:\left\{  
			\begin{split}
				&x_4(t+1)=u_2(t)\oplus x_3(t),\\
				&y_3(t)=x_4(t),
			\end{split}
			\right.
			\\
		\end{split}
	\end{equation}
	where $t=0,1,\dots$; $x_i(t),u_j(t),y_k(t)\in\D$,
	$i\in[1,4]$, $j\in[1,2]$, $k\in[1,3]$.
	
	The cyclic aggregation shown in Fig. \ref{fig15:BCNnetworkgraph} also satisfies Assumption
	\ref{assu1:observability_aggregation}. In Example \ref{exam2:observability_aggregation}, we have 
	shown that $\Sig_1$ and $\Sig_3$ are both observable, hence they are both reconstructible.
	For $\Sig_2$, in its RWPG, there is a self-loop $\{10,01\}\xrightarrow[]{11}\{10,01\}$,
	then by Proposition 
	\ref{prop1:reconstructibility_aggregation}, $\Sig_2$ is not reconstructible.
	Now consider the whole BCN \eqref{eqn4:reconstructibility_aggregation}. Similar to Example 
	\ref{exam1:reconstructibility_aggregation}, we have the whole BCN is observable, and hence reconstructible.

		\begin{figure}
		\begin{center}
        \begin{tikzpicture}[->,>=stealth,node distance=1.5cm]
          \tikzstyle{node}=[shape=circle,draw=black!50,top color=white,bottom color=black!50]
          \tikzstyle{input}=[shape=circle,draw=black!50,top color=white,bottom color=black!50]
          \tikzstyle{output}=[shape=circle,draw=black!50,top color=white,bottom color=black!50]
          \tikzstyle{disturbance}=[shape=circle,fill=red!20!blue,draw=none,text=white,inner sep=2pt]
          \node[node] (x1)                 {$x_1$};
          \node[output] (y1) [below of =x1]  {$y_1$};
		  \node[input] (u1) [above of =x1]  {$u_1$};
		  \node[node] (x2) [right of =x1]  {$x_2$};
		  \node[output] (y2) [below right of =x2] {$y_2$};
		  \node[node] (x3) [above right of = y2] {$x_3$};
		  \node[node] (x4) [right of =x3] {$x_4$};
		  \node[input] (u2) [above of =x4]  {$u_2$};
		  \node[output] (y3) [below of =x4]  {$y_3$};

		  \path (u1) edge (x1)
		  		(x1) edge (y1)
				(u2) edge (x4)
				(x4) edge (y3)
				(x2) edge (x1)
				(x3) edge (x4)
				(x3) edge (x2)
				(x2) edge (x3)
				(x2) edge (y2)
				(x3) edge (y2)
				(x1) edge (x2)
				(x4) edge (x3)
		  ;
		  \draw [dashed] (-0.6,2.2) rectangle (0.6,-2.1)
		  				 (0.7,2.2) rectangle (4.4,-2.1)
		  				 (4.5,2.2) rectangle (5.7,-2.1)
		  ;
		  \node at (0.4,2.0) {$\Ncal_1$};
		  \node at (2.6,2.0) {$\Ncal_2$};
		  \node at (4.8,2.0) {$\Ncal_3$};
		  \end{tikzpicture}
		\caption{Example of an aggregation of a BCN with $4$ state nodes, $2$ input nodes, and $3$ output nodes.}
		\label{fig15:BCNnetworkgraph}
      \end{center}
	\end{figure}

\end{example}

\begin{example}\label{exam3:reconstructibility_aggregation}
	Consider the following BCN corresponding to Fig. \ref{fig16:BCNnetworkgraph}.
	\begin{equation}\label{eqn5:reconstructibility_aggregation}
		\begin{split}
			&\Sig_1:\left\{  
			\begin{split}
				&x_1(t+1)=u_1(t)\vee x_1(t),\\
				&x_2(t+1)=x_1(t)\bar\vee x_4(t),\\
				&y_1(t)=x_2(t),
			\end{split}
			\right.\\
			&\Sig_2:\left\{  
			\begin{split}
				&x_3(t+1)=x_1(t)\bar\vee x_4(t),\\
				&x_4(t+1)=u_2(t)\vee x_4(t),\\
				&y_2(t)=x_3(t),
			\end{split}
			\right.
			\\
		\end{split}
	\end{equation}
	where $t=0,1,\dots$; $x_i(t),u_j(t),y_k(t)\in\D$, $i\in[1,4]$, $j,k\in[1,2]$.
	
	The cyclic aggregation shown in Fig. \ref{fig16:BCNnetworkgraph} satisfies Assumption 
	\ref{assu1:observability_aggregation}.
	One directly sees that both $\Sig_1$ and $\Sig_2$ are observable,
	hence they are also reconstructible.
	In the RWPG of the whole BCN \eqref{eqn5:reconstructibility_aggregation}, there is a self-loop 
	$\{1001,0000\}\xrightarrow[]{00}\{1001,0000\}$, then by Proposition 
	\ref{prop1:reconstructibility_aggregation}, \eqref{eqn5:reconstructibility_aggregation} is not reconstructible.

		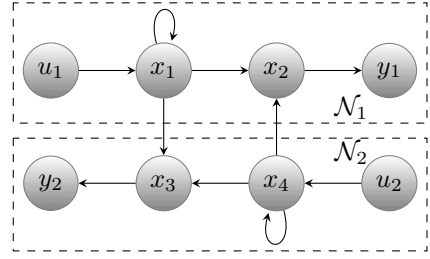
\begin{figure}
		\begin{center}
        \begin{tikzpicture}[->,>=stealth,node distance=1.5cm]
          \tikzstyle{node}=[shape=circle,draw=black!50,top color=white,bottom color=black!50]
          \tikzstyle{input}=[shape=circle,draw=black!50,top color=white,bottom color=black!50]
          \tikzstyle{output}=[shape=circle,draw=black!50,top color=white,bottom color=black!50]
          \tikzstyle{disturbance}=[shape=circle,fill=red!20!blue,draw=none,text=white,inner sep=2pt]
          \node[node] (x1)                 {$x_1$};
          \node[node] (x2) [right of =x1]  {$x_2$};
		  \node[node] (x3) [below of =x1]  {$x_3$};
		  \node[node] (x4) [right of =x3]  {$x_4$};
		  \node[output] (y2) [left of =x3] {$y_2$};
		  \node[input] (u1) [left of =x1] {$u_1$};
		  \node[output] (y1) [right of =x2] {$y_1$};
		  \node[input] (u2) [right of =x4] {$u_2$};
		  \path (x1) edge (x2)
		  		(x1) edge (x3)
				(x4) edge (x2)
		  		(x4) edge (x3)
				(u1) edge (x1)
				(x2) edge (y1)
				(x3) edge (y2) 
				(u2) edge (x4)
				(x1) [loop above] edge (x1)
				(x4) [loop below] edge (x3)
				;
		  \draw [dashed] (-2.0,0.9) rectangle (3.5,-0.7)
		  				 (-2.0,-0.9) rectangle (3.5,-2.4)
		  ;
		  \node at (2.5,-0.5) {$\Ncal_1$};
		  \node at (2.5,-1.1) {$\Ncal_2$};
		  \end{tikzpicture}
		\caption{Example of an aggregation of a BCN with $4$ state nodes, $2$ input nodes, and $2$ output nodes.}
		\label{fig16:BCNnetworkgraph}
      \end{center}
	\end{figure}
\end{example}

Second we show for acyclic aggregations, similar to observability (Theorem \ref{thm1:observability_aggregation}),
all resulting sub-BCNs being reconstructible implies the 
whole BCN also being reconstructible. The similar proof is omitted.

\begin{theorem}\label{thm1:reconstructibility_aggregation}
	Consider a BCN \eqref{BCN1:ObservabilityAggregation} that has
	an acyclic aggregation \eqref{partition:observability_aggregation} satisfying Assumption 
	\ref{assu1:observability_aggregation}.
	If all resulting sub-BCNs 
	are reconstructible then
	the whole BCN is also reconstructible.
\end{theorem}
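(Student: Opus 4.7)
The plan is to imitate the proof of Theorem \ref{thm1:observability_aggregation}, reusing verbatim its topological reordering step but replacing the one-shot observability argument by an inductive reconstructibility argument that propagates the reconstructibility constants along the cascade.

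First, I would apply the first half of the proof of Theorem \ref{thm1:observability_aggregation} (equivalently, Proposition \ref{prop2:observability_aggregation}) to obtain a bijection $\tau:[1,s]\to[1,s]$ under which $\Ncal^i=\bigcup_{j=1}^i\Ncal_{\tau(j)}$ has zero indegree for every $i$, so that the topological property \eqref{eqn7:observability_aggregation} holds. By Assumption \ref{assu1:observability_aggregation} each sub-BCN $\Sig_{\tau(i)}$ is well-formed, and by hypothesis it is reconstructible; let $p_{\tau(i)}$ be a reconstructibility constant for $\Sig_{\tau(i)}$ provided by Definition \ref{def1:reconstructibility_aggregation}, and set
\[
t_i:=\sum_{j=1}^{i}\bigl(p_{\tau(j)}+1\bigr),\qquad p:=t_s-1.
\]

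Second, I would prove by induction on $i\in[1,s]$ the following claim: for any initial states $x_0,x_0'\in\D^n$ and any input sequence $\{u_0,\dots,u_p\}$ whose corresponding whole output sequences coincide on $[0,p+1]$, the $\Ncal_{\tau(i)}\cap\Xcal$-components of $x_t$ and $x_t'$ coincide at every $t\ge t_i$. In the base case $i=1$, $\Ncal_{\tau(1)}$ has zero indegree, so $\Sig_{\tau(1)}$ is driven only by external inputs, which are common to both trajectories; the sub-outputs of $\Sig_{\tau(1)}$ inherited from the whole outputs agree on $[0,p_{\tau(1)}+1]$, so reconstructibility of $\Sig_{\tau(1)}$ forces coincidence at $t_1$, and determinism propagates it to all subsequent times. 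In the inductive step, the topological property \eqref{eqn7:observability_aggregation} together with Assumption \ref{assu1:observability_aggregation} makes the effective input of $\Sig_{\tau(i)}$ depend only on external inputs and on state nodes belonging to $\Ncal_{\tau(j)}$ with $j<i$; the induction hypothesis forces these state nodes to carry the same values in both trajectories for $t\ge t_{i-1}$, so the effective input to $\Sig_{\tau(i)}$ is identical on $[t_{i-1},t_i-1]$, while the corresponding sub-outputs coincide on $[t_{i-1},t_i]\subseteq[0,p+1]$. Applying reconstructibility of $\Sig_{\tau(i)}$ with time translated by $t_{i-1}$ then yields equality of the $\Ncal_{\tau(i)}$-components at $t_i$, again preserved thereafter by determinism. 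Taking $i=s$ gives $x_{p+1}=x_{p+1}'$, which is precisely the contrapositive of Definition \ref{def1:reconstructibility_aggregation} for the whole BCN with constant $p$.

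The main obstacle I expect is the bookkeeping of the time shifts: at each step one has to justify that the invocation of reconstructibility for $\Sig_{\tau(i)}$ on the window $[t_{i-1},t_i]$ really applies, which needs (a) time-shift invariance of reconstructibility (immediate from time-invariance of the BCN), (b) identity of the effective inputs on that window (which is where the induction hypothesis and the acyclicity built into $\tau$ are jointly used, the latter to forbid any feedback edge from a later part back into $\Sig_{\tau(i)}$), and (c) identity of the sub-outputs on that window (inherited from the standing assumption that the whole outputs agree on $[0,p+1]$ together with Assumption \ref{assu1:observability_aggregation}). Once this bookkeeping is in place, the argument collapses to a chain of reconstructibility invocations along the cascade, exactly parallel to the observability chain used in the proof of Theorem \ref{thm1:observability_aggregation}.
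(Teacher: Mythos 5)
Your proposal is correct, and it is the argument the paper intends: the paper states only that ``the similar proof is omitted,'' and your write-up supplies exactly the content that this omission glosses over. It is worth emphasizing that you correctly identified the one place where the observability proof does \emph{not} transfer verbatim. In Theorem \ref{thm1:observability_aggregation} one locates the first block $\Ncal_{\tau(k)}$ (in the cascade order) where the two initial states differ; since all earlier blocks agree at time $0$ and have identical effective inputs, they agree for all time, so $\Sig_{\tau(k)}$ sees identical effective inputs forever and a single invocation of its observability finishes the proof. For reconstructibility this one-shot argument fails: if one assumes the outputs agree on $[0,p+1]$ and tries to derive $x_{p+1}=x_{p+1}'$ by looking at the first block whose components differ at time $p+1$, the earlier blocks are only known to agree at time $p+1$, not on $[0,p]$, so the effective inputs to that block need not coincide and its reconstructibility cannot be invoked. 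Your staggered induction with the cumulative horizons $t_i=\sum_{j\le i}(p_{\tau(j)}+1)$ is precisely the right fix: it establishes agreement of block $\tau(j)$ for all $t\ge t_j$ before block $\tau(i)$, $i>j$, ever needs its effective inputs to coincide, and the three bookkeeping points you list (time-shift invariance, identity of effective inputs on the window via \eqref{eqn7:observability_aggregation} and the induction hypothesis, and identity of the sub-outputs inherited from the whole output sequence) are each justified as you describe. The resulting reconstructibility index $p=\sum_{i=1}^{s}(p_i+1)-1$ for the whole BCN is a small but genuine quantitative by-product that the paper does not record.
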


\section{Application}\label{sec:application}

In this section, we apply  our method to study the observability/reconstructibility
of the BCN T-cell receptor kinetics model
\cite{Klamt2006TCellReceptor}.

T-cells are a type of white blood cells known as lymphocytes. These white blood cells play a central role
in adaptive immunity and enable the immune system to mount specific immune responses.
T-cells have the ability to recognize potentially dangerous agents and subsequently initiate
an immune reaction against them. They do so by using T-cell receptors to detect foreign antigens
bound to major histocompatibility complex molecules, and then activate, through a signaling cascade,
several transcription factors. These transcription factors, in turn, influence the cell's fate such as proliferation.
For the details, we refer the reader to \cite{Klamt2006TCellReceptor}. The BCN T-cell receptor kinetics model
given in \cite{Klamt2006TCellReceptor} is shown in Tab. \ref{tab1:BCNnetworkgraph}, its network graph
is shown in Fig. \ref{fig10:BCNnetworkgraph}. In Fig. \ref{fig10:BCNnetworkgraph}, there exist $3$ input nodes,
and $37$ state nodes. Hence the model has $2^3$ inputs, and $2^{37}$ states.
It is almost impossible to use a PC to deal with such a large BCN directly.
We next use the aggregation method to deal with it. 
In order to obtain the initial state of the BCN, one must choose some state nodes to observe,
since there exists no output node in the network.
In this sense, one should first choose some state nodes, and then assign each of these chosen state nodes
one new output node as its child such that each of these new added output nodes has only one parent.
Then one can obtain the values of these chosen state nodes by observing their corresponding output nodes.
The chosen state nodes and their corresponding output nodes
are represented as the nodes with shadows and their
shadows, respectively, in Fig. \ref{fig10:BCNnetworkgraph}.
Specifically, we choose the set consisting of the following $16$ state nodes:
\begin{equation}
	\begin{split}
		&\{TCRbind, cCbl, PAGCsk, Rlk, TCRphos,SLP76,\\
		&Itk, Grb2Sos, PLCg(bind), CRE, AP1, NFkB,\\
		&NFAT, Fos, Jun, RasGRP1\}.
	\end{split}
	\label{eqn8:observability_aggregation}
\end{equation}
Actually, these chosen state nodes
can make the whole BCN observable, and if any one of them cannot be observed, the whole BCN is not observable.
In what follows, we prove this conclusion, and illustrate the process of choosing these state nodes
by using the previous main results.

We assume that it is not known which nodes in Fig. \ref{fig10:BCNnetworkgraph} are chosen to be observed,
and next illustrate the process of looking for them.
First of all, $CRE$, $AP1$, $NFkB$, and $NFAT$ must be chosen to be observed, because they have no children,
and if any one of them cannot be observed, the whole BCN is not observable. 
By observing these observed nodes, the initial values of 
$CREB$, $Rsk$, $ERK$, $MEK$, $Raf$, $Ras$, $IkB$, $IKKbeta$, $PKCth$,
$DAG$, $Calcin$, $Ca$, $IP3$, and $PLCg(act)$ can be obtained. Taking $NFAT$ for example, by Tab. \ref{tab1:BCNnetworkgraph},
we have $Calcin(0)=NFAT(1)$, 
$Ca(0)=Calcin(1)=NFAT(2)$, $IP3(0)=NFAT(3)$, and $PLCg(act)(0)=NFAT(4)$, then one can obtain $NFAT(0)$,
$Calin(0)$, $Ca(0)$, $IP3(0)$, and $PLCg(act)(0)$
by observing $NFAT(0)$, $NFAT(1)$, $NFAT(2)$, $NFAT(3)$, and $NFAT(4)$, respectively.
Secondly, since $AP1(t+1)=Fos(t)\wedge Jun(t)$
(cf. Tab. \ref{tab1:BCNnetworkgraph}) and both $Fos$ and $Jun$ only affect $AP1$  (cf. Fig. 
\ref{fig10:BCNnetworkgraph}), 
$Fos$ and $Jun$ must be chosen to be observed. This is because if any one of $Fos$ and $Jun$ 
cannot be observed, say, $Fos$, then $Jun(0)=0$ implies that $AP1(1)=0$ no matter what $Fos(0)$ is
and $Fos(0)$ cannot be obtained forever,
i.e., the whole BCN is not observable. 
By observing $Jun$, the initial values of $JNK$ and $SEK$
can be obtained. Thirdly, since $Ras(t+1)=Grb2Sos(t)\vee RasGRP1(t)$ (also cf. Tab. \ref{tab1:BCNnetworkgraph}) 
and $Ras$ is the unique child of both $Grb2Sos$ and $RasGRP1$ (also cf. Fig. \ref{fig10:BCNnetworkgraph}),
similar to the case for $Fos$ and $Jun$, we have
$Grb2Sos$ and $RasGRP1$ must be chosen to be observed, or the whole BCN is not observable.
By observing $Grb2Sos$, the initial values of $LAT$ and $ZAP70$ can be obtained.
Fourthly, we give an acyclic aggregation for the BCN as shown in Fig. \ref{fig10:BCNnetworkgraph}
(also cf. Fig. \ref{fig11:BCNnetworkgraph}), and appoint that for each state node chosen to be observed,
its corresponding new added output node belongs to the same part as the state node.
By the above analysis, the resulting sub-BCNs $\Sig_3,\Sig_4$, and $\Sig_5$ corresponding to
subgraphs $\Ncal_3,\Ncal_4$, and $\Ncal_5$ are observable. Next, we look for the minimal number of
state nodes to be observed in subgraphs
$\Ncal_1$ and $\Ncal_2$ to make the corresponding sub-BCNs observable by using Proposition
\ref{prop1:observability_aggregation}. Since subgraph $\Ncal_1$ has $3$ input nodes, and $8$ state nodes,
and $\Ncal_2$ has fewer input nodes and state nodes, we can use Proposition \ref{prop1:observability_aggregation} to 
deal with them.
For $\Ncal_2$, if we choose all state nodes to be observed, then obviously
the resulting sub-BCN is observable. Since we want to know which node is necessary for the resulting BCN to be observable, we 
choose any $5$ of the $6$ state nodes in $\Ncal_2$ to be observed,
and verify the observability of the resulting sub-BCNs by using Proposition 
\ref{prop1:observability_aggregation}. After verifying the observability of these $6$ sub-BCNs one by one,
we find that $SLP76$, $Itk$, $Grb2Sos$, and $PLCg(bind)$ are necessary for these sub-BCNs to be observable,
and the other two nodes are not.
Also by Proposition \ref{prop1:observability_aggregation}, we obtain that if we choose only 
$SLP76$, $Itk$, $Grb2Sos$, and $PLCg(bind)$ to be observed, then the resulting sub-BCN, denoted by $\Sig_2$,
is observable.
Hence the set of these $4$ nodes is the unique minimal set of nodes making the resulting sub-BCN observable.
Based on this, we choose $SLP76$, $Itk$, $Grb2Sos$, and $PLCg(bind)$ to
be observed in $\Ncal_2$. For $\Ncal_1$, using the same method as for dealing with $\Ncal_2$, by Proposition
\ref{prop1:observability_aggregation} we find that $TCRbind$, $cCbl$, $PAGCsk$, $Rlk$, and $TCRphos$
are necessary for the resulting sub-BCNs to be observable, and the other $3$ nodes are not.
We also obtain that if we choose only $TCRbind$, $cCbl$, $PAGCsk$, $Rlk$, and $TCRphos$ to be observed, 
then the resulting sub-BCN, denoted by $\Sig_1$, is observable.
Hence the set of these $5$ nodes is the unique minimal set
of nodes making the resulting sub-BCN observable.

Until now we find all the $16$ state nodes 
shown in \eqref{eqn8:observability_aggregation}. Next we prove that if we choose only these $16$ nodes
to be observed, then the whole BCN is observable. Note in this sense,
the acyclic aggregation shown in Fig. \ref{fig10:BCNnetworkgraph} satisfies Assumption \ref{assu1:observability_aggregation},
and we have proved that all sub-BCNs $\Sig_i$ are observable, $i\in[1,5]$, then by Theorem 
\ref{thm1:observability_aggregation}, the whole BCN is observable.

To finish this part, we show that if any one of these 16 nodes in \eqref{eqn8:observability_aggregation}
cannot be observed, then the whole BCN is not observable. Previously we have shown that 
$CRE$, $AP1$, $Jun$, $Fos$, $NFkB$, $NFAT$, $RasGRP1$, and $Grb2Sos$ are necessary for the whole BCN
to be observable, so if any one of these nodes is not observed, the whole BCN is not observable.
Now consider $Rlk$, $Itk$ and $PLCg(bind)$. By Tab. \ref{tab1:BCNnetworkgraph} and Fig. \ref{fig10:BCNnetworkgraph},
we have
$PLCg(act)(t+1)=PLCg(bind)(t)\wedge SLP76(t)\wedge ZAP70(t)\wedge (Itk(t)\vee Rlk(t))$,
all of $Rlk$, $Itk$ and $PLCg(bind)$ affect only $PLCg(act)$.
If $Rlk$ (resp. $Itk$, $PLCg(bind)$) is not observed and
$SLP76(0)=0$, then $PLCg(act)(1)=0$ no matter what $Rlk(0)$ (resp. $Itk(0)$,
$PLCg(bind)(0)$) is, i.e., $Rlk(0)$ (resp. $Itk(0)$,
$PLCg(bind)(0)$)  cannot be obtained forever,
and hence the whole BCN is not observable.
Consider $SLP76$. We have $SLP76$ affect only $Itk$ and $PLCg(act)$,
$PLCg(act)(t+1)=PLCg(bind)(t)\wedge SLP76(t)\wedge ZAP70(t)\wedge (Itk(t)\vee Rlk(t))$, and
$Itk(t+1)=SLP76(t)\wedge ZAP70(t)$. If $SLP76$ is not observed, and $ZAP70(0)=0$, then
$PLCg(act)(1)=Itk(1)=0$ no matter what $SLP76(0)$ is, i.e., $SLP76(0)$ cannot be obtained forever,
and the whole BCN is not observable either. For $TCRphos$, we have $TCRphos$ affects only $ZAP70$,
and $ZAP70(t+1)=(\neg cCbl(t))\wedge Lck(t)\wedge TCRphos(t)$.
For $PAGCsk$, we have $PAGCsk$ affects only $Lck$, and $Lck(t+1)=(\neg PAGCsk(t))\wedge CD8(t)\wedge CD45(t)$.
Similarly we have if either $TCRphos$ or $PAGCsk$ cannot be observed, then the whole BCN is not observable.
For $cCbl$, we have $cCbl$ affects only $TCRbind$ and $ZAP70$, 
$TCRbind(t+1)=(\neg cCbl(t))\wedge TCRlig(t)$, and $ZAP70(t+1)=(\neg cCbl(t))\wedge Lck(t)\wedge TCRphos(t)$.
If $cCbl$ is not observed, and $TCRlig(0)=TCRphos(0)=0$, then $TCRbind(1)=ZAP70(1)=0$ no matter what
$cCbl(0)$ is,  i.e., $cCbl(0)$ cannot be obtained forever,
and the whole BCN is not observable either.
Finally consider $TCRbind$. $TCRbind$ affects only $Fyn$, $TCRphos$, and $PAGCsk$. We have
$Fyn(t+1)=CD45(t)\wedge(Lck(t)\vee TCRbind(t))$, $TCRphos(t+1)=Fyn(t)\vee(Lck(t)\wedge TCRbind(t))$,
and $PAGCsk(t+1)=Fyn(t)\vee(\neg TCRbind(t))$. If $TCRbind$ is not observed, $CD45(0)=0$, and $Fyn(0)=1$,
then $Fyn(1)=0$, $TCRphos(1)=PAGCsk(1)=1$ no matter what $TCRbind(0)$ is, i.e., $TCRbind(0)$ cannot be
obtained forever, and then the whole BCN is not observable. This part has been finished.
In addition, note that if $Fyn$ is not observed,
we cannot obtain that the whole BCN is not observable by using similar procedure. 
This is because $Fyn$ affects only $PAGCsk$ and $TCRphos$, $PAGCsk(t+1)=Fyn(t)\vee(\neg TCRbind(t))$,
$TCRphos(t+1)=Fyn(t)\vee(Lck(t)\wedge TCRbind(t))$; if $TCRbind(0)=0$, then no matter what $Lck(0)$ is,
$TCRphos(1)=Fyn(0)$; if $TCRbind(0)=1$, then  no matter what $Lck(0)$ is, $PAGCsk(1)=Fyn(0)$.
That is, in both cases, $Fyn(0)$ can be observed. This procedure is not sufficient to prove that the whole
BCN is observable either, so the aggregation method and Proposition \ref{prop1:observability_aggregation} are necessary.

On the other hand, a weaker type of observability (i.e., \cite[Def. 5]{Zhang2014ObservabilityofBCN},
not equivalent to the one studied in this paper)
of BCNs is charaterized in \cite{Li2015ControlObservaBCN} by using an algebraic method,
and it is proved that for the BCN T-cell receptor kinetics 
model, the unique minimal set of nodes making the whole BCN observable is 
\begin{equation}
	\begin{split}
		&\{TCRbind, cCbl, PAGCsk, Rlk, TCRphos,SLP76,\\
		&Itk, Grb2Sos, PLCg(bind), CRE, AP1, NFkB,\\
		&NFAT, Fos, Jun, RasGRP1\},
	\end{split}
	\label{eqn9:observability_aggregation}
\end{equation}
which is a proper subset of \eqref{eqn8:observability_aggregation}, and does not contain $cCbl$
or $PAGCsk$.

Next we study the reconstructibility of the BCN T-cell receptor kinetics model.
Obviously, if we choose the $16$ state nodes shown in 
\eqref{eqn8:observability_aggregation} to be observed, then the whole BCN 
is reconstructible. However, to make the whole BCN reconstructible, we do not need to observe so many state nodes.
In order to use the aggregation method to characterize reconstructibility, we give a new acyclic aggregation for
its network graph as shown in Fig. \ref{fig17:BCNnetworkgraph}.

For $\Ncal_1$, we have that for each state node $x$ in $\Ncal_1$, if $x$ is not observed
and all other state nodes in $\Ncal_1$ are observed, then the resulting sub-BCN $\Sig_1$ is reconstructible.
That is, none of state nodes of $\Ncal_1$ is necessary for $\Sig_1$ to be reconstructible.
$\Ncal_5$ has the same property as $\Ncal_1$. We also have that for each state node $x$ in $\Ncal_2$ (resp. $\Ncal_3$,
$\Ncal_{4_1}$, $\Ncal_{4_2}$), if $x$ is observed and all other state nodes are not observed, then the
resulting sub-BCN $\Sig_2$ (resp. $\Sig_3$, $\Sig_{4_1}$, $\Sig_{4_2}$) is reconstructible.
Besides, for $\Ncal_5$, if only $PKCth$ is observed, then the resulting sub-BCN $\Sig_5$ is reconstructible,
since $DAG(t)=PKCth(t+1)$, $SEK(t)=PKCth(t-1)$, $IKKbeta(t)=PKCth(t-1)$,
$JNK(t)=SEK(t-1)=PKCth(t-2)$, $Jun(t)=JNK(t-1)=PKCth(t-3)$,
$IkB(t)=\neg IKKbeta(t-1)=\neg PKCth(t-2)$, $NFkB(t)=\neg IkB(t-1)=PKCth(t-3)$, i.e., 
after time step $3$, the states of all nodes in $\Ncal_5$ can be observed by observing $PKCth$.

We next assume that any state node in \eqref{eqn18:observability_aggregation} is observed,
and the other state nodes in Fig. \ref{fig17:BCNnetworkgraph} are not observed.
\begin{equation}
	\begin{split}
		&\{TCRbind, cCbl, PAGCsk, Rlk, TCRphos,\\
		&x_2,x_3,x_{4_1},x_{4_2},PKCth\},
	\end{split}
	\label{eqn18:observability_aggregation}
\end{equation}
where $x_2\in \Ncal_2$, $x_3\in\Ncal_3$, $x_{4_1}\in\Ncal_{4_1}$, and $x_{4_2}\in\Ncal_{4_2}$
are arbitrarily chosen. 

In this case, the acyclic aggregation of Fig. \ref{fig17:BCNnetworkgraph} satisfies Assumption 
\ref{assu1:observability_aggregation}. We have shown that the resulting sub-BCNs $\Sig_1$
is observable, hence reconstructible. We also have that the resulting sub-BCNs
$\Sig_2$, $\Sig_3$, $\Sig_{4_1}$, $\Sig_{4_2}$, and $\Sig_5$ are reconstructible. Then by Theorem 
\ref{thm1:reconstructibility_aggregation}, the whole BCN is reconstructible.
Compared to observing at least $16$ state nodes to make the whole BCN be observable,
in order to make the whole BCN be reconstructible, we only need to 
observe $10$ state nodes.


\begin{table*}
	\centering
	\begin{tabular}{cccccc}
		Nodes & Boolean rule & Nodes & Boolean rule & Nodes & Boolean rule \\
		\hline\hline
		CD8 & Input & Gads & LAT & PKCth & DAG\\\hline
		CD45 & Input & Grb2Sos & LAT & PLCg(act) & 
		\quad\begin{tabular}{r}
			(Itk$\wedge$PLCg(bind)$\wedge$SLP76$\wedge$ZAP70)\\
			$\vee$(PLCg(bind)$\wedge$Rlk$\wedge$SLP76$\wedge$ZAP70)
		\end{tabular}\!\!\!\!\\\hline
		TCRlig & Input & IKKbeta & PKCth & PAGCsk & Fyn$\vee$($\neg$TCRbind)\\\hline
		AP1 & Fos$\wedge$Jun & IP3 & PLCg(act) & PLCg(bind) & LAT\\\hline
		Ca & IP3 & Itk & SLP76$\wedge$ZAP70 & Raf & Ras\\\hline
		Calcin & Ca & IkB & $\neg$IKKbeta & Ras & Grb2Sos$\vee$ RasGRP1\\\hline
		cCbl & ZAP70 & JNK & SEK & RasGRP1 & DAG$\wedge$PKCth\\\hline
		CRE &  CREB & Jun & JNK & Rlk & Lck \\\hline
		CREB & Rsk & LAT & ZAP70 & Rsk & ERK \\\hline
		DAG & PLCg(act) & Lck & ($\neg$PAGCsk$)\wedge$CD8$\wedge$CD45 & SEK & PKCth\\\hline
		ERK & MEK & MEK & Raf & SLP76 & Gads\\\hline
		Fos & ERK & NFAT & Calcin & TCRbind & ($\neg$cCbl)$\wedge$TCRlig\\\hline
		Fyn & (Lck$\wedge$CD45)$\vee$(TCRbind$\wedge$CD45)
		& NFkB & $\neg$IkB & TCRphos & Fyn$\vee$(Lck$\wedge$TCRbind)\\\hline
		& & & & ZAP70 & ($\neg$cCbl)$\wedge$Lck$\wedge$TCRphos\\
	\end{tabular}
	\caption{Updating rules for the nodes of the T-cell receptor kinetics model \cite{Klamt2006TCellReceptor}.}
	\label{tab1:BCNnetworkgraph}
\end{table*}

\begin{figure}
	\centering
	\begin{tikzpicture}[->,>=stealth,node distance=2.5cm,
		point/.style={circle,inner sep=0pt,minimum size=2pt,fill=red},
		skip loop/.style={to path={-- ++(0,#1) -| (\tikztotarget)}},
		state/.style={
		rectangle,minimum size=6mm,rounded corners=3mm,
		very thin,draw=black!50,
		top color=white,bottom color=black!50,font=\ttfamily\bf},
		input/.style={
		rectangle,
		minimum size=6mm,
		very thin,draw=black!50!black!50, 
		top color=white, 
		bottom color=black!50, 
		font=\itshape\bf},
		output/.style={
		rectangle,rounded corners=5.5mm,drop shadow={opacity=0.5},
		minimum size=6mm,
		very thin,draw=black!50, 
		top color=white, 
		bottom color=black!50, 
		font=\itshape\bf}]
\matrix[row sep=6mm,column sep=5mm] {
\node (CD45) [input] {\tiny CD45};  & \node (CD8) [input] {\tiny CD8}; & \node (TCRlig) [input] {\tiny TCRlig}; \\ 
& & \node (TCRbind) [output] {\tiny TCRbind};  &  \node (cCbl) [output] {\tiny cCbl};\\
\node (Fyn) [state] {\tiny Fyn}; & & \node (PAGCsk) [output] {\tiny PAGCsk}; & \node (Rlk) [output] {\tiny Rlk}; \\
\node (TCRphos) [output] {\tiny TCRphos};  & \node (Lck) [state] {\tiny Lck}; & \node (ZAP70) [state] {\tiny ZAP70}; & &\node (IP3) [state] {\tiny IP3};\\
\node (LAT) [state] {\tiny LAT}; & \node (Gads) [state] {\tiny Gads}; & \node (SLP76) [output] {\tiny SLP76}; &\node (Itk) [output] {\tiny Itk}; & \node (Ca) [state] {\tiny Ca};\\
\node (Grb2Sos) [output] {\tiny Grb2Sos}; & \node (PLCgbind) [output] {\tiny PLCg(bind)}; & & \node (PLCgact) [state] {\tiny PLCg(act)}; & \node (Calcin) [state] {\tiny Calcin};\\
& \node (Ras) [state] {\tiny Ras}; & \node (RasGRP1) [output] {\tiny RasGRP1}; & \node (DAG) [state] {\tiny DAG}; & \node (NFAT) [output] {\tiny NFAT};\\ 
\node (CRE) [output] {\tiny CRE}; &  \node (Raf) [state] {\tiny Raf}; & \node (SEK) [state] {\tiny SEK}; & \node (PKCth) [state] {\tiny PKCth}; & \node (IKKbeta) [state] {\tiny IKKbeta};\\
\node (CREB) [state] {\tiny CREB}; & \node (MEK) [state] {\tiny MEK}; & \node (JNK) [state] {\tiny JNK}; & \node (Jun) [output] {\tiny Jun}; & \node (IkB) [state] {\tiny IkB};\\
\node (Rsk) [state] {\tiny Rsk}; & \node (ERK) [state] {\tiny ERK}; & \node (Fos) [output] {\tiny Fos}; & \node (AP1) [output] {\tiny AP1}; & \node (NFkB) [output] {\tiny NFkB};\\
};

\path 
(CD45) edge (Fyn)
(CD45) edge (Lck)
(CD8) edge (Lck)
(TCRlig) edge (TCRbind)
(cCbl) edge (TCRbind)
(TCRbind) edge (Fyn)
(TCRbind) edge (TCRphos)
(TCRbind) edge (PAGCsk)
(Fyn) edge (PAGCsk)
(Fyn) edge (TCRphos)
(PAGCsk) edge (Lck)
(Lck) edge (Rlk)
(Lck) edge (Fyn)
(Lck) edge (TCRphos)
(Lck) edge [bend right] (ZAP70)
(cCbl) edge (ZAP70)
(ZAP70) edge (cCbl)
(TCRphos) edge [bend right] (ZAP70.-130)
(ZAP70) edge (LAT)
(ZAP70) edge (Itk)
(ZAP70) edge (PLCgact)
(Rlk) edge [bend left] (PLCgact)
(LAT) edge (Gads)
(Gads) edge (SLP76)
(SLP76) edge (Itk)
(SLP76) edge (PLCgact)
(Itk) edge (PLCgact)
(LAT) edge (Grb2Sos)
(LAT) edge (PLCgbind)
(PLCgbind) edge (PLCgact)
(PLCgact.45) edge (IP3)
(PLCgact) edge (DAG)
(RasGRP1) edge (Ras)
(DAG) edge (RasGRP1)
(Ras) edge (Raf)
(DAG) edge (PKCth)
(PKCth) edge (RasGRP1)
(Raf) edge (MEK)
(MEK) edge (ERK)
(ERK) edge (Rsk)
(Rsk) edge (CREB)
(CREB) edge (CRE)
(PKCth) edge (SEK)
(SEK) edge (JNK)
(JNK) edge (Jun)
(Jun) edge (AP1)
(Fos) edge (AP1)
(ERK) edge (Fos)
(IP3) edge (Ca)
(Ca) edge (Calcin)
(Calcin) edge (NFAT)
(PKCth) edge (IKKbeta)
(IKKbeta) edge (IkB)
(IkB) edge (NFkB)
(Grb2Sos) edge (Ras)
;

\draw [dashed] (-4.1,6.0) rectangle (2.6,1.2);
\draw node at (1.8,5.4) {$\Ncal_1$};

\draw[dashed]
			   (-4.1,-6.2) -- (-4.1,-1.4) -- (1.0,-1.4) -- (1.0,-2.5) -- (-0.8,-2.5) --
			   (-0.8,-5.0) -- (2.2,-5.0) -- (2.2,-6.2) -- (-4.0,-6.2) -- cycle;

\draw node at (-2.5,-2.5) {$\Ncal_4$};
			   
\draw[dashed]  (4.1,2.4) -- (2.7,2.4) -- (2.7,0.0) -- (0.7,0.0) -- 
			   (0.7,-1.3) -- (2.7,-1.3) -- (2.7,-2.3) -- (4.1,-2.3) -- cycle;

\draw node at (2.6,-0.35) {$\Ncal_3$};

\draw[dashed]
			   (1.2,-1.4) -- (2.5,-1.4) -- (2.5,-2.4) -- (4.1,-2.4) -- (4.1,-6.2) -- 
			   (2.4,-6.2) -- (2.4,-4.8) -- (-0.6,-4.8) -- (-0.6,-2.6) -- (1.2,-2.6) -- cycle;

 \draw node at (2.5,-2.8) {$\Ncal_5$};

 \draw[dashed]
 				(-4.1,1.1) -- (2.6,1.1) -- (2.6,0.1) -- (0.0,0.1) --
				(0.0,-1.3) -- (-4.1,-1.3) --cycle;

\draw node at (-0.5,0.0) {$\Ncal_2$};

\end{tikzpicture}
\caption{Network graph of the T-cell receptor kinetics model (cf. \cite{Klamt2006TCellReceptor}),
	where rectangles denote input nodes, the
	other nodes denote state nodes, particularly the nodes with shadows are chosen to 
	be observed. The aggregation shown in this figure is acyclic.}
	\label{fig10:BCNnetworkgraph}
\end{figure}

	\begin{figure}
		\begin{center}
        \begin{tikzpicture}[->,>=stealth,node distance=1.5cm]
          \tikzstyle{node}=[shape=circle,draw=black!50,top color=white,bottom color=black!50]
          \tikzstyle{input}=[shape=circle,fill=red!80,draw=none,text=white,inner sep=2pt]
          \tikzstyle{output}=[shape=circle,fill=red!50!blue,draw=none,text=white,inner sep=2pt]
          \tikzstyle{disturbance}=[shape=circle,fill=red!20!blue,draw=none,text=white,inner sep=2pt]
		  \node[node] (n1)                  {$\Ncal_1$};
          \node[node] (n2) [below left of = n1]  {$\Ncal_2$};
          \node[node] (n3) [below right of =n1]  {$\Ncal_3$};
		  \node[node] (n4) [below of =n2]  {$\Ncal_4$};
		  \node[node] (n5) [below of =n3]  {$\Ncal_5$};
		  \path (n1) edge (n2)
		 		(n1) edge (n3)
		  		(n2) edge (n4)
				(n2) edge (n3)
				(n3) edge (n5)
				(n5) edge (n4)
				;
		  \end{tikzpicture}
		  \caption{Aggregation graph corresponding to Fig. \ref{fig10:BCNnetworkgraph}.}
		\label{fig11:BCNnetworkgraph}
      \end{center}
	\end{figure}

\begin{figure}
	\centering
	\begin{tikzpicture}[->,>=stealth,node distance=2.5cm,
		point/.style={circle,inner sep=0pt,minimum size=2pt,fill=red},
		skip loop/.style={to path={-- ++(0,#1) -| (\tikztotarget)}},
		state/.style={
		rectangle,minimum size=6mm,rounded corners=3mm,
		very thin,draw=black!50,
		top color=white,bottom color=black!50,font=\ttfamily\bf},
		input/.style={
		rectangle,
		minimum size=6mm,
		very thin,draw=black!50!black!50, 
		top color=white, 
		bottom color=black!50, 
		font=\itshape\bf},
		output/.style={
		rectangle,rounded corners=5.5mm,drop shadow={opacity=0.5},
		minimum size=6mm,
		very thin,draw=black!50, 
		top color=white, 
		bottom color=black!50, 
		font=\itshape\bf}]
\matrix[row sep=6mm,column sep=5mm] {
\node (CD45) [input] {\tiny CD45};  & \node (CD8) [input] {\tiny CD8}; & \node (TCRlig) [input] {\tiny TCRlig}; \\ 
& & \node (TCRbind) [state] {\tiny TCRbind};  &  \node (cCbl) [state] {\tiny cCbl};\\
\node (Fyn) [state] {\tiny Fyn}; & & \node (PAGCsk) [state] {\tiny PAGCsk}; & \node (Rlk) [state] {\tiny Rlk}; \\
\node (TCRphos) [state] {\tiny TCRphos};  & \node (Lck) [state] {\tiny Lck}; & \node (ZAP70) [state] {\tiny ZAP70}; & &\node (IP3) [state] {\tiny IP3};\\
\node (LAT) [state] {\tiny LAT}; & \node (Gads) [state] {\tiny Gads}; & \node (SLP76) [state] {\tiny SLP76}; &\node (Itk) [state] {\tiny Itk}; & \node (Ca) [state] {\tiny Ca};\\
\node (Grb2Sos) [state] {\tiny Grb2Sos}; & \node (PLCgbind) [state] {\tiny PLCg(bind)}; & & \node (PLCgact) [state] {\tiny PLCg(act)}; & \node (Calcin) [state] {\tiny Calcin};\\
& \node (Ras) [state] {\tiny Ras}; & \node (RasGRP1) [state] {\tiny RasGRP1}; & \node (DAG) [state] {\tiny DAG}; & \node (NFAT) [state] {\tiny NFAT};\\ 
\node (CRE) [state] {\tiny CRE}; &  \node (Raf) [state] {\tiny Raf}; & \node (SEK) [state] {\tiny SEK}; & \node (PKCth) [state] {\tiny PKCth}; & \node (IKKbeta) [state] {\tiny IKKbeta};\\
\node (CREB) [state] {\tiny CREB}; & \node (MEK) [state] {\tiny MEK}; & \node (JNK) [state] {\tiny JNK}; & \node (Jun) [state] {\tiny Jun}; & \node (IkB) [state] {\tiny IkB};\\
\node (Rsk) [state] {\tiny Rsk}; & \node (ERK) [state] {\tiny ERK}; & \node (Fos) [state] {\tiny Fos}; & \node (AP1) [state] {\tiny AP1}; & \node (NFkB) [state] {\tiny NFkB};\\
};

\path 
(CD45) edge (Fyn)
(CD45) edge (Lck)
(CD8) edge (Lck)
(TCRlig) edge (TCRbind)
(cCbl) edge (TCRbind)
(TCRbind) edge (Fyn)
(TCRbind) edge (TCRphos)
(TCRbind) edge (PAGCsk)
(Fyn) edge (PAGCsk)
(Fyn) edge (TCRphos)
(PAGCsk) edge (Lck)
(Lck) edge (Rlk)
(Lck) edge (Fyn)
(Lck) edge (TCRphos)
(Lck) edge [bend right] (ZAP70)
(cCbl) edge (ZAP70)
(ZAP70) edge (cCbl)
(TCRphos) edge [bend right] (ZAP70.-130)
(ZAP70) edge (LAT)
(ZAP70) edge (Itk)
(ZAP70) edge (PLCgact)
(Rlk) edge [bend left] (PLCgact)
(LAT) edge (Gads)
(Gads) edge (SLP76)
(SLP76) edge (Itk)
(SLP76) edge (PLCgact)
(Itk) edge (PLCgact)
(LAT) edge (Grb2Sos)
(LAT) edge (PLCgbind)
(PLCgbind) edge (PLCgact)
(PLCgact.45) edge (IP3)
(PLCgact) edge (DAG)
(RasGRP1) edge (Ras)
(DAG) edge (RasGRP1)
(Ras) edge (Raf)
(DAG) edge (PKCth)
(PKCth) edge (RasGRP1)
(Raf) edge (MEK)
(MEK) edge (ERK)
(ERK) edge (Rsk)
(Rsk) edge (CREB)
(CREB) edge (CRE)
(PKCth) edge (SEK)
(SEK) edge (JNK)
(JNK) edge (Jun)
(Jun) edge (AP1)
(Fos) edge (AP1)
(ERK) edge (Fos)
(IP3) edge (Ca)
(Ca) edge (Calcin)
(Calcin) edge (NFAT)
(PKCth) edge (IKKbeta)
(IKKbeta) edge (IkB)
(IkB) edge (NFkB)
(Grb2Sos) edge (Ras)
;

\draw [dashed] (-4.1,6.0) rectangle (2.6,1.2);
\draw node at (1.8,5.4) {$\Ncal_1$};

\draw[dashed]
			   (-4.1,-6.2) -- (-4.1,-1.4) -- (-2.8,-1.4) -- (-2.8,-5.0) --
			   (-0.8,-5.0) -- (2.2,-5.0) -- (2.2,-6.2) -- (-4.0,-6.2) -- cycle;

\draw[dashed]
			   (-2.6,-1.4) -- (1.0,-1.4) --  (1.0,-2.5) -- (-0.8,-2.5) -- (-0.8,-4.8) --
			   (-2.6,-4.8) -- cycle;

\draw node at (-2.2,-2.5) {$\Ncal_{4_1}$};

\draw node at (-3.2,-2.5) {$\Ncal_{4_2}$};

\draw[dashed]  (4.1,2.4) -- (2.7,2.4) -- (2.7,0.0) -- (0.7,0.0) -- 
			   (0.7,-1.3) -- (2.7,-1.3) -- (2.7,-2.3) -- (4.1,-2.3) -- cycle;

\draw node at (2.6,-0.35) {$\Ncal_3$};

\draw[dashed]
			   (1.2,-1.4) -- (2.5,-1.4) -- (2.5,-2.4) -- (4.1,-2.4) -- (4.1,-6.2) -- 
			   (2.4,-6.2) -- (2.4,-4.8) -- (-0.6,-4.8) -- (-0.6,-2.6) -- (1.2,-2.6) -- cycle;

 \draw node at (2.5,-2.8) {$\Ncal_5$};

 \draw[dashed]
 				(-4.1,1.1) -- (2.6,1.1) -- (2.6,0.1) -- (0.0,0.1) --
				(0.0,-1.3) -- (-4.1,-1.3) --cycle;

\draw node at (-0.5,0.0) {$\Ncal_2$};

\end{tikzpicture}
\caption{A new acyclic aggregation of the network graph of the T-cell receptor kinetics model
	(cf. \cite{Klamt2006TCellReceptor}),
	where rectangles denote input nodes, the
	other nodes denote state nodes.}
	\label{fig17:BCNnetworkgraph}
\end{figure}

	\begin{figure}
		\begin{center}
        \begin{tikzpicture}[->,>=stealth,node distance=1.5cm]
          \tikzstyle{node}=[shape=circle,draw=black!50,top color=white,bottom color=black!50]
          \tikzstyle{input}=[shape=circle,fill=red!80,draw=none,text=white,inner sep=2pt]
          \tikzstyle{output}=[shape=circle,fill=red!50!blue,draw=none,text=white,inner sep=2pt]
          \tikzstyle{disturbance}=[shape=circle,fill=red!20!blue,draw=none,text=white,inner sep=2pt]
		  \node[node] (n1)                  {$\Ncal_1$};
          \node[node] (n2) [below left of = n1]  {$\Ncal_2$};
          \node[node] (n3) [below right of =n1]  {$\Ncal_3$};
		  \node[node] (n41) [below of =n2]  {$\Ncal_{4_1}$};
		  \node[node] (n42) [below right of =n41]  {$\Ncal_{4_2}$};
		  \node[node] (n5) [below of =n3]  {$\Ncal_5$};
		  \path (n1) edge (n2)
		 		(n1) edge (n3)
		  		(n2) edge (n41)
				(n2) edge (n3)
				(n3) edge (n5)
				(n5) edge (n41)
				(n5) edge (n42)
				(n41) edge (n42)
				;
		  \end{tikzpicture}
		  \caption{Aggregation graph corresponding to Fig. \ref{fig17:BCNnetworkgraph}.}
		\label{fig18:BCNnetworkgraph}
      \end{center}
	\end{figure}
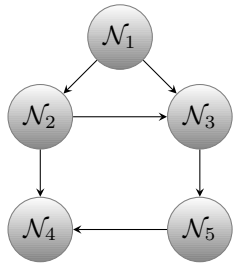

\section{Conclusion}\label{sec:conclusion}

In this paper, we used the aggregation method to reduce computational complexity of
verifying the observability/reconstructibility of large-scale BCNs with special structures.
We first defined a special class of aggregations of BCNs that are compatible with observability/reconstructibility,
then we showed that even for this special class of aggregations, the resulting sub-BCNs 
being observable/reconstructible does not imply the whole BCN being observable/reconstructible, and vice versa.
However, for acyclic aggregations in this special class, we proved that 
the resulting sub-BCNs being observable/reconstructible implies the whole BCN being observable/reconstructible,
and acyclic aggregations are equivalent to the cascading aggregations frequently used in the literature.
Finding such aggregations having sufficiently many parts can reduce computational complexity tremendously.
Hence the key point is to find an effective method to look for such aggregations. 

It was proved in \cite{Zhao2013AggregationAlgorithmBNattractor} that the aggregation
consisting of strongly connected components is the finest acyclic aggregation.
However, this aggregation may not be compatible with observability/reconstructibility, since observability/reconstructibility may be 
meaningless for some strongly connected components. Hence how to find acyclic aggregations
that are compatible with observability/reconstructibility is still a challenging problem.
In addition, since the aggregation consisting of strongly connected components is
the finest acyclic aggregation, we can first find it, then by furthermore combining
some strongly connected components to make the new aggregation compatible with observability/reconstructibility.
This is left for further study.


The special aggregation method characterized in this paper can also be used to deal with the observability/reconstructibility of
discrete-time large-scale linear (special classes of nonlinear) control systems over Euclidean spaces
with special network structures.
Taking discrete-time linear time-invariant control systems for example, if an acyclic aggregation satisfying Assumption
\ref{assu1:observability_aggregation} has been found, then one can verify the observability of whole system
via verifying the observability of each part by using the
well known observability rank criterion.
Since given the dimensions of input space, state space, and output space, the set of observable 
linear time-invariant control systems is dense in the set of linear time-invariant control systems,
the aggregation method is feasible.
Further discussion is left for future study.

The main contribution of this paper is showing that one can use the acyclic aggregation method 
to deal with the observability/reconstructibility of large-scale BCNs, which may motive
the study on the observability/reconstructibility of large-scale BCNs based on different types of aggregations.



\end{document}